\documentclass[10pt]{amsart}

\usepackage[margin=1in]{geometry}
\usepackage{setspace}
\usepackage{color}
\usepackage{cancel}
\usepackage{hyperref}
\usepackage{graphicx}
\usepackage{diagbox}

\newtheorem{theorem}{Theorem}[section]
\newtheorem{lemma}[theorem]{Lemma}
\newtheorem{proposition}[theorem]{Proposition}
\newtheorem{corollary}[theorem]{Corollary}

\theoremstyle{definition}

\newtheorem{example}[theorem]{Example}

\theoremstyle{remark}
\newtheorem{remark}[theorem]{Remark}

\newcommand{\HH}{\mathrm{H}} \newcommand{\TT}{\mathrm{T}}

\DeclareMathOperator{\overlap}{ov}
\DeclareMathOperator{\Var}{Var}

\begin{document}

\title{Moments for generalizations of a coin flip game}
\author{Jia Huang}
\address{Department of Mathematics and Statistics, University of Nebraska at Kearney, Kearney, NE 68849, USA}
\email{huangj2@unk.edu}

\keywords{Coin flip; dice roll; Eulerian number; Fibonacci number; Goulden--Jackson cluster method; ABRACADABRA problem}

\subjclass{05A15, 11B39}

\begin{abstract}
We derive a recursive formula for the moments of the number of flips using a possibly biased coin to produce a prescribed finite binary string $S$ when $S$ is either a run of heads or a run of heads followed by a tails.
Our recursive formula involve certain sums, which we simplify by using a one-parameter extension of the well-studied Eulerian number, which belongs to the two-parameter family of numbers introduced by Graham, Knuth, and Patashnik.
We also use the Goulden--Jackson cluster method and Fa\`a di Bruno's formula to establish a closed formula for the moments in a more general situation where a die having an arbitrary number of faces with possibly different probabilities is rolled repeatedly until a prescribed finite word occurs.
\end{abstract}

\maketitle

\section{Introduction}\label{sec:intro}

An X-post by Litt~\cite{Litt} in 2024 asked whether HH (two heads in a row) or HT (a heads followed by a tails) will occur more often when one flips a coin 100 times.
It attracted a lot of attentions, including some follow-up work from various perspectives by Basdevant--H\'enard--Maurel-S\'egala--Singh~\cite{BHMS}, Cheplyaka~\cite{Cheplyaka}, Ekhad--Zeilberger~\cite{EkhadZeilberger}, Grimmett~\cite{Grimmett}, Janson--Nica--Segert~\cite{JNS}, and Segert~\cite{Segert}, all in the same year.
A somewhat similar game concerning how many flips it will take to see the first occurrence of HH (or HT instead) was mentioned back in 2022 by Simonson~\cite[Ch.~2]{Simonson}.
In recent work~\cite{Huang} we derived some summation identities involving higher order Fibonacci numbers and certain variations of them from a more general coin flip game in which one keeps flipping a coin until a prescribed string $S$ of heads and tails occurs. 

Our work~\cite{Huang} on the coin flip game partially overlaps with work of Benjamin, Neer, Otero, and Sellers~\cite{ProbFibSum} on some weighted Fibonacci sums obtained from higher moments of the expected coin flips to produce $S=\HH\HH$.
Motivated by this connection, we now study a common generalization, i.e., the expected value $E(Y^n)$ of the $n$th power of the random variable $Y$ for the number of coin flips to obtain a prescribed string $S$ of finitely many heads and tails.
We also allow the coin to be biased, so the probability for heads and tails may be unequal.
Note that even using a fair coin, the starting side is slightly favored (with a probability of about $.51$) according to Diaconis, Holmes, and Montgomery~\cite{bias}.

We determine the moment $E(Y^n)$ recursively when $S=\HH^k$ (a run of $k$ heads) or $S=\HH^k\TT$ (a run of $k$ heads followed by a tails).
For small values of $k$ our result for $E(Y^n)$ coincides with some integer sequences on OEIS~\cite{OEIS}.
When $k$ is large we can simplify certain finite sums involved in our recursive formula by using an extension (depending on $k$) of the well-known \emph{Eulerian number}, which is interesting by itself as it satisfies a closed formula and some other identities that reduce to the alternating sum formula, the Carlitz identity, and the Worpitzky identity for the classical Eulerian number.
This can also be viewed as special case of a sequence defined by a two-parameter recurrence, first introduced by Graham, Knuth, and Patashnik~\cite{GKP} and later studied by others from various perspectives, such as Mansour--Shattuck~\cite{MansourShattuck}, Neuwirth~\cite{Neuwirth}, Spivey~\cite{Spivey}, and Wilf~\cite{Wilf}.

Nonetheless, it seems tedious to derive a closed formula for the moment $E(Y^n)$ from our recursive formula when $n$ gets big. 
It is also natural ask what happens when $S$ is an arbitrary finite string of heads and tails.
Furthermore, instead of a coin with only two sides, one can use a die with $m$ faces marked $1,2,\ldots,m$ and allow the probabilities of the faces to be distinct.
It turns out that a very different approach becomes effective in this much more general situation:
the Goulden--Jackson cluster method~\cite{GJ1, GJ2} (see also Guibas--Odlyzko~\cite{GuibasOdlyzko} and Noonan--Zeilberger~\cite{GJ3}) gives the generating function for words avoiding $S$ as a \emph{factor} (i.e., consecutive subword), and we connect these words (non-bijectively) to words with $S$ occurring exactly once at the end to obtain a closed formula for the moment $E(Y^n)$:
\begin{equation}
E(Y^n) = \sum_{\pi\in\Pi_n} |\pi|! \prod_{B\in \pi} \sum_{R\in\overlap(S)} \frac{(1-|R|)^{|B|} - (-|R|)^{|B|}}{P(R)}
\end{equation}
Here $\Pi_n$ consists of all partitions of the set $[n]:=\{1,2,\ldots,n\}$, $S$ is an arbitrary finite word on the alphabet $[m]$, $\overlap(S)$ consists of the \emph{overlaps} of $S$, i.e., the prefixes of $S$ that happen to be suffixes of $S$ as well, $|R|$ is the length of $R$, and $P(R)$ is the probability of $R$ when the die is rolled $|R|$ times.
The set partitions come into play due to Fa\`a di Bruno's formula, a generalization of the chain rule to higher order derivatives (for applications of Fa\`a di Bruno's formula to integer partitions, see recent work of Matsusaka~\cite{Matsusaka}).
In particular, the expected number $E(S)$ of turns to obtain $S$ is 
\begin{equation}
E(S) = E(Y) = \sum_{R\in\overlap(S)} \frac{1}{P(R)}.
\end{equation}

By our closed formula, $E(Y^n)$ remains the same when $S$ is reversed, and when the die is unbiased, $E(S)$ is asymptotically $m^{|S|}$ (plus some lower powers of $m$). 
This confirms affirmatively some conjectures on $E(S)$ for the unbiased coin flip game from earlier work~\cite{Huang}.
Another interesting consequence is that $E(S) = 1/P(S)$ when $\overlap(S)=\{S\}$ but not quite otherwise.

Furthermore, our closed formula for $E(S)$ provides a natural explanation for some previous results by Janson, Nica, and Segert~\cite{JNS} on which of two chosen words will occur more frequently when rolling the die many times.
It also gives an alternative way to solve a known problem: It will take on average $26^{11}+26^4+26$ times for a monkey to produce the word ABRACADABRA if the monkey randomly types a capital letter each time.
This answer follows immediately from our formula for $E(S)$ if one uses an unbiased die with $m=26$ faces and observes that $\overlap(\text{ABRACADABRA}) = \{\text{A, ABRA, ABRACADABRA}\}$.
The ABRACADABRA problem has been studied using probability with martingales~\cite{Lutsko, Williams}, and there might be existing work on the more general higher moment $E(Y^n)$ via a probabilistic approach, although we cannot find any.

This paper is structured as follows. 
In Section~\ref{sec:sum} we define a one-parameter extension of the Eulerian number and establish some summation identities related to it.
In section~\ref{sec:coin} we study the moment $E(Y^n)$ when a (possibly biased) coin is tossed repeatedly until $S$ occurs and use the results from Section~\ref{sec:sum} to help us determine $E(Y^n)$ recursively when $S=\HH^k$ or $S=\HH^k\TT$.
In Section~\ref{sec:die} we use the Goulden--Jackson cluster method and Fa\`a di Bruno's formula to obtain a closed formula for the moments $E(Y^n)$ in the more general situation when a die with $m$ faces is rolled repeatedly until a prescribed finite word $S$ on the alphabet $[m]$ occurs.
Finally, in Section~\ref{sec:questions} we give some questions for future study.

\section{An extension of the Eulerian number and some summation identities}\label{sec:sum}

In this section we establish some summation identities that will be used in Section~\ref{sec:coin}.
These identities involve a one-parameter extension of the well-known Eulerian number, which belongs to a two-parameter generalization introduced by Graham, Knuth, and Patashnik~\cite{GKP} and studied by many others~\cite{MansourShattuck, Neuwirth, Spivey, Wilf}.

Recall that the \emph{Eulerian number} $e_{n,i}$~\cite[A008292]{OEIS} is defined by $e_{0,0} := 1$, $e_{n,i} := 0$ whenever $i\le 0$ (unless $n=i=0$) or $i>n$, and $e_{n,i} := i e_{n-1,i} + (n-i+1) e_{n-1,i-1}$ for other values of $n$ and $i$;
see Table~\ref{tab:e}.

\begin{table}[h]
\footnotesize
\begin{tabular}{|l|c|c|c|c|c|c|c|c|c|}
\hline
\diagbox{$n$}{$i$} & $0$ & $1$ & $2$ & $3$ & $4$ & $5$ & $6$ & $7$ \\
\hline
$0$ & $1$ & & & & & & & \\
\hline
$1$ & & $1$ & & & & & & \\
\hline
$2$ & & $1$ & $1$ & & & & & \\
\hline 	
$3$ & & $1$ & $4$ & $1$ & & & & \\
\hline
$4$ & & $1$ & $11$ & $11$ & $1$ & & & \\
\hline
$5$ & & $1$ & $26$ & $66$ & $26$ & $1$ & & \\
\hline
$6$ & & $1$ & $57$ & $302$ & $302$ & $57$ & $1$ & \\
\hline
$7$ & & $1$ & $120$ & $1191$ & $2416$ & $1191$ & $120$ & $1$ \\
\hline
\end{tabular}
\vskip5pt
\caption{$e_{n,i}$ for small values of $n,i$}\label{tab:e}
\end{table}

We define an extension of $e_{n,i}$ depending on a parameter $k$ by $e_{0,0}^k := 1$, $e_{n,i}^k := 0$ whenever $i<0$ or $i>n$ and 
\[ 
e_{n,i}^k := (k+i+1)e_{n-1,i}^k + (n-k-i)e_{n-1,i-1}^k
\]
for other integer values of $n$ and $i$;
see Table~\ref{tab:ek} for some examples.
Note that $e_{n,i}^0=e_{n,i+1}$ for all $(n,i)\ne(0,0)$.

\begin{table}[h]
\footnotesize
\begin{tabular}{|l|c|c|c|c|c|c|}
\hline
\diagbox{$n$}{$i$} & $0$ & $1$ & $2$ & $3$ & $4$  \\
\hline
$0$ & $1$ & & & & \\
\hline
$1$ & $1+k$ & $-k$ & & & \\
\hline
$2$ & $(1+k)^2$ & $1-2k-2k^2$ & $k^2$ & & \\
\hline 	
$3$ & $(1+k)^3$ & $4-6k^2-3k^3$ & $1-3k+3k^2+3k^3$ & $-k^3$ & \\
\hline
$4$ & $(1+k)^4$ & $11+12k-6k^2-12k^3-4k^4$ & $11-12k-6k^2+12k^3+6k^4$ & $1-4k+6k^2-4k^3-4k^4$ & $k^4$ \\
\hline
\end{tabular}
\vskip5pt
\caption{$e_{n,i}^k$ for small values of $n,i$}\label{tab:ek}
\end{table}

We use $e_{n,i}$ and $e_{n,i}^k$ to study a finite sum, which will be used in Section~\ref{sec:coin}.

\begin{proposition}\label{prop:Carlitz}
For any integers $n,k\ge0$, we have
\[ 
\sum_{d=0}^k d^n x^d = \sum_{i=0}^n \frac{e_{n,i} x^i - e_{n,i}^k x^{k+i+1}}{(1-x)^{n+1}}.
\]
\end{proposition}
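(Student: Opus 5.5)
Set $F_n^k(x):=\sum_{d=0}^k d^n x^d$, so the claim is $(1-x)^{n+1}F_n^k(x) = A_n(x) - x^{k+1}A_n^k(x)$, where $A_n(x):=\sum_{i=0}^n e_{n,i}x^i$ and $A_n^k(x):=\sum_{i=0}^n e_{n,i}^k x^i$. I will argue by induction on $n$, using the operator $x\frac{d}{dx}$. It satisfies $F_n^k(x) = x\frac{d}{dx}F_{n-1}^k(x)$ for $n\ge1$, and this holds also when $n=1$ if we take $0^0=1$.

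\textbf{Base case $n=0$.} We have $F_0^k(x)=(1-x^{k+1})/(1-x)$. Since $e_{0,0}=e_{0,0}^k=1$, this is exactly the claim.

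\textbf{Inductive step.} Assume $F_{n-1}^k = \bigl(A_{n-1}-x^{k+1}A_{n-1}^k\bigr)/(1-x)^n$ and apply $x\frac{d}{dx}$. Using $x\frac{d}{dx}(1-x)^{-n}=nx(1-x)^{-n-1}$ gives
\[
(1-x)^{n+1}F_n^k = (1-x)\,xA_{n-1}' + nxA_{n-1} - x^{k+1}\Bigl[(1-x)\bigl((k+1)A_{n-1}^k + xA_{n-1}^{k\prime}\bigr) + nxA_{n-1}^k\Bigr].
\]
The proof then splits into two coefficient comparisons, treating the two parts separately.

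First, $(1-x)xA_{n-1}'+nxA_{n-1}=A_n$. This is the classical derivative recurrence for Eulerian polynomials (Carlitz's identity, in the paper's indexing with $e_{n,0}=0$ for $n\ge1$). Extracting the coefficient of $x^i$ gives $ie_{n-1,i}-(i-1)e_{n-1,i-1}+ne_{n-1,i-1} = ie_{n-1,i}+(n-i+1)e_{n-1,i-1}=e_{n,i}$, which is the defining recurrence.

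Second, the bracket equals $A_n^k$. Its coefficient of $x^i$ is
\[
(k+1)e^k_{n-1,i} + ie^k_{n-1,i} - (k+1)e^k_{n-1,i-1} - (i-1)e^k_{n-1,i-1} + ne^k_{n-1,i-1} = (k+i+1)e^k_{n-1,i} + (n-k-i)e^k_{n-1,i-1},
\]
which is $e^k_{n,i}$ by definition.

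The main obstacle is bookkeeping at the boundary indices. Both $i=0$ and $i=n$ must be covered by the vanishing conventions $e^k_{n-1,-1}=e^k_{n-1,n}=0$, which hold, and similarly for $e_{n,i}$. For the case $n=1$ the convention $0^0=1$ in $F_0^k$ is harmless, because $x\frac{d}{dx}$ kills the constant term and $d^1=0$ at $d=0$ anyway. Dividing by $(1-x)^{n+1}$ then completes the induction.
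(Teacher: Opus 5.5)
Your proof is correct and follows the paper's own argument. Like the paper, you induct on $n$, apply $x\frac{d}{dx}$ to the inductive hypothesis, and match coefficients against the defining recurrences for $e_{n,i}$ and $e^k_{n,i}$; your version simply spells out the boundary-index and $0^0=1$ bookkeeping that the paper leaves implicit.
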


\begin{proof}
Fix an arbitrary integer $k\ge0$.
We prove the desired identity by induction on $n$.
It is trivial when $n=0$. 
Assume the result holds for an arbitrary $n\ge0$.
Applying the operator $x\frac{d}{dx}$ gives
\begin{align*}
\sum_{d=0}^k d^{n+1} x^d =& \sum_{i=0}^n \frac{ (1-x) 
\left( ie_{n,i} x^i - (k+i+1) e_{n,i}^k x^{k+i+1} \right) 
+ (n+1) \left( e_{n,i} x^{i+1} - e_{n,i}^k x^{k+i+2} \right) }
{(1-x)^{n+2}} \\
=& \sum_{i=0}^{n+1} \frac{ (ie_{n,i} + (n+2-i)e_{n,i-1})x^i 
- ( (k+i+1)e_{n,i}^k + (n+1-k-i) e_{n,i-1}^k ) x^{k+i+1} } 
{(1-x)^{n+2}} \\
=& \sum_{i=0}^{n+1} \frac{ e_{n+1,i} x^i 
-  e_{n+1,i}^k x^{k+i+1} } {(1-x)^{n+2}}. \qedhere
\end{align*}
\end{proof}

One can show by induction on $n$ that $e_{n,i}^k$ is a polynomial in $k$ of degree $n$ with special cases $e_{n,0}^k = (1+k)^n$ and $e_{i,i}^k = (-1)^i k^i$ for all integers $n,i\ge0$.
By taking a limit as $k\to\infty$ (for $|x|<1$) we derive from Proposition~\ref{prop:Carlitz} the well-known Carlitz identity for Eulerian numbers:
\[ \sum_{d=0}^\infty d^n x^d = \sum_{i=0}^n \frac{e_{n,i} x^i}{(1-x)^{n+1}}. \]
This infinite series was also studied by Sellers~\cite{Sellers}; for the case $x=1/2$ see Fulghesu--Sellers--Taylor~\cite{FulghesuSellersTaylor}.
Combining the Carlitz identity with Proposition~\ref{prop:Carlitz} immediately gives the following result.

\begin{corollary}\label{cor:sum}
For any integers $n,k\ge0$, we have
\[ \sum_{d=k+1}^\infty d^n x^d = \sum_{i=0}^n \frac{e_{n,i}^k x^{k+i+1}}{(1-x)^{n+1}}.
\]
\end{corollary}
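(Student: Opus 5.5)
The plan is to subtract the finite identity of Proposition~\ref{prop:Carlitz} from the Carlitz identity stated just above. Throughout I would work with $|x|<1$, so that all series converge absolutely. Alternatively, one can view everything as an identity of formal power series in $x$, with $(1-x)^{-(n+1)}$ expanded as a geometric-type series. In either setting the claimed identity makes sense.

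Concretely, I would write
\[
\sum_{d=k+1}^\infty d^n x^d = \sum_{d=0}^\infty d^n x^d - \sum_{d=0}^k d^n x^d .
\]
The first sum on the right is $\sum_{i=0}^n e_{n,i}x^i/(1-x)^{n+1}$ by the Carlitz identity. By Proposition~\ref{prop:Carlitz}, the second sum equals
\[
\sum_{i=0}^n \frac{e_{n,i}x^i - e^k_{n,i}x^{k+i+1}}{(1-x)^{n+1}}.
\]
Subtracting, the terms $e_{n,i}x^i$ cancel, and what remains is exactly $\sum_{i=0}^n e^k_{n,i}x^{k+i+1}/(1-x)^{n+1}$, as claimed.

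The only point needing care is justifying the Carlitz identity itself. The paper obtains it by letting $k\to\infty$ in Proposition~\ref{prop:Carlitz}, and that step requires $e^k_{n,i}x^{k+i+1}\to0$. This holds because $e^k_{n,i}$ is a polynomial in $k$ of degree at most $n$, which can be checked by induction from the recurrence, while $x^{k}$ decays geometrically for $|x|<1$.

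To avoid this limit argument, one could instead prove the Carlitz identity directly, by induction on $n$ using the operator $x\frac{d}{dx}$ exactly as in the proof of Proposition~\ref{prop:Carlitz}, with the Eulerian recurrence. That would be my fallback if the limit needed more justification. Once the Carlitz identity is in hand, the corollary follows immediately, so I expect no real obstacle.
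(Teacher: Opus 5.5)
Your proposal is correct and is essentially the paper's own argument: the paper also obtains this corollary by subtracting Proposition~\ref{prop:Carlitz} from the Carlitz identity, which it gets by letting $k\to\infty$. Your justification of that limit, namely that $e^k_{n,i}$ is a polynomial in $k$ of degree at most $n$ while $x^k$ decays geometrically for $|x|<1$, is the same fact the paper cites.
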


Multiplying both sides of the identity in Corollary~\ref{cor:sum} by $(1-x)^{n+1}$ and extracting the coefficient of $x^{k+i+1}$ we obtain a closed formula for $e_{n,i}^k$.

\begin{corollary}\label{cor:ek}
For any integers $n,k,i\ge0$ we have
\[
e_{n,i}^k = \sum_{j=0}^{i} (-1)^j \binom{n+1}{j} (k+i+1-j)^n.
\]
\end{corollary}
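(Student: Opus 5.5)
The plan is the one the paper itself indicates: take Corollary~\ref{cor:sum}, multiply through by $(1-x)^{n+1}$, and compare coefficients of $x^{k+i+1}$. Multiplying both sides by $(1-x)^{n+1}$ gives
\[
\sum_{i=0}^n e_{n,i}^k x^{k+i+1} = (1-x)^{n+1}\sum_{d=k+1}^\infty d^n x^d .
\]
This is an identity of formal power series. The right side is a product of a polynomial and a power series supported on exponents $\ge k+1$. The left side is the polynomial supported on exponents $k+1,\dots,k+n+1$.

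The next step expands the right side with the binomial theorem: $(1-x)^{n+1}=\sum_{j=0}^{n+1}(-1)^j\binom{n+1}{j}x^j$. For a fixed integer $i\ge0$, the coefficient of $x^{k+i+1}$ collects the pairs $(j,d)$ with $j+d=k+i+1$, $0\le j\le n+1$ and $d\ge k+1$. The condition $d=k+i+1-j\ge k+1$ is equivalent to $j\le i$. So the coefficient is
\[
\sum_{j=0}^{\min(i,n+1)} (-1)^j\binom{n+1}{j}(k+i+1-j)^n ,
\]
and the truncation at $n+1$ can be dropped because $\binom{n+1}{j}=0$ for $j>n+1$. This gives exactly the sum in the statement.

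On the left side, the coefficient of $x^{k+i+1}$ is $e_{n,i}^k$ for $0\le i\le n$. For $i>n$ it is $0$, which agrees with the convention $e_{n,i}^k=0$ for $i>n$. Equating coefficients therefore proves the formula for every integer $i\ge0$. As a byproduct, for $i>n$ it yields the vanishing of the alternating sum $\sum_{j=0}^{i}(-1)^j\binom{n+1}{j}(k+i+1-j)^n$.

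The main thing to justify is that the coefficient comparison is legitimate. Corollary~\ref{cor:sum} is stated as an analytic identity for $|x|<1$, obtained via the limit $k\to\infty$. I would point out that both sides are analytic on $|x|<1$, so their Taylor coefficients agree; alternatively, the identity can be read directly as one of formal power series. The edge case $n=0$ needs a moment of care, since then $d^0=1$ for every $d\ge k+1$ and the formula reads $e_{0,0}^k=1$, which checks out. Beyond these points the argument is routine bookkeeping of the range of $j$.
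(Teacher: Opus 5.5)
Your proposal is correct and follows the paper's own route: multiply Corollary~\ref{cor:sum} by $(1-x)^{n+1}$ and read off the coefficient of $x^{k+i+1}$, where the constraint $d=k+i+1-j\ge k+1$ truncates the sum at $j\le i$. Your extra remarks go beyond the paper's one-line argument. These are the justification of coefficient comparison on $|x|<1$ and the observation that the case $i>n$ gives a vanishing alternating sum, consistent with the convention $e_{n,i}^k=0$.
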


Setting $k=0$ in Corollary~\ref{cor:sum} gives the well-known alternating sum formula for the Eulerian number:
\[
e_{n,i+1} = \sum_{j=0}^{i} (-1)^j \binom{n+1}{j} (i+1-j)^n.
\]
Setting $k=1$ in Corollary~\ref{cor:sum} gives another known sequence~\cite[A180246]{OEIS}.

We can also extract the coefficient of $x^{j+k+1}$ from Corollary~\ref{cor:sum} and obtain a different identity, which becomes the Worpitzky identity when $k=0$.

\begin{corollary}\label{cor:Worpitzky}
For any integers $n,k,j\ge0$ we have
\[
\sum_{i=0}^n e_{n,i}^k \binom{n+j-i}{n} = (j+k+1)^n .
\]
\end{corollary}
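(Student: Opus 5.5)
We start from Corollary~\ref{cor:sum}, which gives the power series identity
\[
\sum_{d=k+1}^\infty d^n x^d \;=\; \frac{x^{k+1}}{(1-x)^{n+1}}\sum_{i=0}^n e_{n,i}^k x^{i}.
\]
We treat both sides as formal power series in $x$ and compare the coefficients of $x^{j+k+1}$ for an arbitrary integer $j\ge0$.

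On the left-hand side, the term with $d=j+k+1$ lies in the summation range, since $j+k+1\ge k+1$. Hence the coefficient of $x^{j+k+1}$ is $(j+k+1)^n$.

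On the right-hand side, we expand the denominator by the negative binomial series
\[
(1-x)^{-(n+1)}=\sum_{m\ge0}\binom{n+m}{n}x^m .
\]
After dividing out $x^{k+1}$, we need the coefficient of $x^{j}$ in the product $\sum_i e_{n,i}^k x^i\cdot\sum_m\binom{n+m}{n}x^m$. Pairing the index $i$ with $m=j-i$ gives
\[
\sum_{i=0}^{\min(n,j)} e_{n,i}^k\binom{n+j-i}{n}.
\]

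It remains to remove the restriction $i\le j$ so that the sum runs over all $0\le i\le n$. Suppose $j<i\le n$. Then $0\le n+j-i<n$, so $\binom{n+j-i}{n}=0$ under the standard convention. These extra terms therefore contribute nothing, and the sum extends to all $i$ from $0$ to $n$, which gives the claimed identity.

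This boundary check on the binomial coefficients is the only subtle point, and it is routine. The formal-series manipulation is legitimate because Corollary~\ref{cor:sum} holds for $|x|<1$, so the coefficients of the two convergent power series must agree.
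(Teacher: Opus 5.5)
Your proposal is correct and follows the paper's route: extract the coefficient of $x^{j+k+1}$ on both sides of Corollary~\ref{cor:sum} using the expansion $(1-x)^{-(n+1)}=\sum_{m\ge0}\binom{n+m}{n}x^m$. Your check that $\binom{n+j-i}{n}=0$ for $j<i\le n$ is precisely the detail the paper leaves implicit.
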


\begin{remark}
The \emph{ordered Bell numbers} or \emph{Fubini numbers}~\cite[A000670]{OEIS}
\begin{equation}\label{eq:Fubini}
b_n := \sum_{i=0}^\infty \frac{i^n}{2^{i+1}} =\sum_{i=0}^n e_{n,i}2^{n-i} = (1,1,3,13,75,541,4683,47293, \ldots)
\end{equation}
can be obtained from the Carlitz identity by setting $x=\frac12$
and is a special case of 
\[ c_{n,\ell} := \binom{n}{\ell} \sum_{i=0}^n \sum_{j=0}^{i} (-1)^j \binom{n+1}{j} (i+1-j)^\ell 2^{n-i} \]
since $c_{n,n}=b_n$.
Using this we can substitute $x=\frac12$ in Proposition~\ref{prop:Carlitz} and obtain
\begin{align*}
\sum_{d=0}^k \frac{d^n}{2^d} 
&= 2b_n -  \sum_{i=0}^n \sum_{j=0}^{i} (-1)^j \binom{n+1}{j} (k+i+1-j)^n 2^{n-k-i}  
= 2b_n - \sum_{\ell=0}^n \frac{c_{n,\ell} k^{n-\ell}}{2^k}.
\end{align*}
For $n>\ell\ge0$, computations suggest that $c_{n,\ell}$ agrees a known sequence~\cite[A202687]{OEIS} and has an interesting signed version~\cite[A162312]{OEIS}.
For example, we have
\[ \sum_{i=0}^k \frac{1}{2^i} = 2 - \frac{1}{2^k}, \quad
\sum_{i=0}^k \frac{i}{2^i} = 2 - \frac{k+2}{2^k}, \quad
\sum_{i=0}^k \frac{i^2}{2^i} = 6 - \frac{k^2+4k+6}{2^k}, \]
\[ \sum_{i=0}^k \frac{i^3}{2^i} = 26 - \frac{k^3+6k^2+18k+26}{2^k}, \quad
\sum_{i=0}^k \frac{i^4}{2^i} = 150 - \frac{k^4+8k^3+36k^2+104k+150}{2^k}, \]
\[ \sum_{i=0}^k \frac{i^5}{2^i} = 1082 - \frac{k^5+10k^4+60k^3+260k^2+750k+1082}{2^k}, \quad \ldots \]
\end{remark}



\section{Moments for a coin flip game}\label{sec:coin}

In this section we study the coin flip game in which one keeps flipping a coin with a probability $p$ for heads and $1-p$ for tails until a prescribed string $S$ consisting of finitely many heads and tails occurs.
Let $Y=Y(S)$ be the random variable for the number of coin flips to end this game throughout out this section.
We focus on the moments of $Y$, that is, the expected value $E(Y^n)$ of $Y^n$ for all positive integers $n$.
Let $Z(S)$ be the set of all finite strings of heads and tails with $S$ occurring exactly once at the end.
Then 
\[ E(Y^n) = \sum_{R\in Z(S)} |R|^n P(R) \]
where $|R|$ is the length of $R$ and $P(R)$ is the probability of $R$ when the coin is flipped $|R|$ times.
In particular, 
\begin{equation}\label{eq:E}
E(Y^n) = \sum_{d\ge0} \frac{d^n Z_d(S)}{2^d} \quad \text{for } p=\frac12
\end{equation}
where $Z_d(S)$ is the number of strings in $\{\HH,\TT\}^d$ with $S$ occurring exactly once at the end.
Benjamin, Neer, Otero, and Sellers~\cite{ProbFibSum} obtained certain weighted Fibonacci sums from Eq.~\eqref{eq:E} when $S=\HH\HH$.
In earlier work~\cite{Huang} we studied Eq.~\eqref{eq:E} when $n=1$ and $S$ either consists of at most four maximal runs of heads and tails or alternates between heads and tails.
Now we deal with the more general situation when $n$ is an arbitrary positive integer and $p$ is an arbitrary real number between $0$ and $1$.
We will derive a recursive formula for $E(Y^n)$ when $S$ is a run of heads or a run of heads followed by a tails.

\subsection{Game ended by a run of heads}
We first study the case when $Y=Y(S)$ for $S=\HH^k$.

\begin{theorem}\label{thm:Hk}
Suppose $S=\HH^k$ and $Y=Y(S)$. 
Then $E(Y^0)=1$ and for $n\ge1$, 
\begin{align*}
E(Y^n) &= k^n + \sum_{j=0}^{n-1} \binom{n}{j} E(Y^j) \sum_{i=1}^k \frac{(1-p)i^{n-j}}{p^{k-i+1}} \\
&= k^n + \sum_{j=0}^{n-1} \binom{n}{j} E(Y^j) \sum_{i=0}^n \frac{e_{n-j,i} p^i - e_{n-j,i}^k p^{k+i+1}}{(1-p)^{n-j}p^{k+1}}.
\end{align*}
\end{theorem}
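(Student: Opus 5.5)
The plan is to condition on the first tails (a first-step decomposition) to get a renewal-type identity for $Y$, expand binomially, and then evaluate the inner sums with Proposition~\ref{prop:Carlitz}.

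\textbf{Decomposition.} Every string in $Z(\HH^k)$ has exactly one of the following two forms.
\begin{itemize}
\item It is $\HH^k$ itself, which has length $k$ and probability $p^k$.
\item It begins with $\HH^{i-1}\TT$ for a unique $i\in\{1,\dots,k\}$, followed by an arbitrary string of $Z(\HH^k)$. This uses the fact that a tails resets all progress toward $\HH^k$, and that no earlier occurrence of $\HH^k$ is possible inside $\HH^{i-1}\TT$ because $i-1<k$.
\end{itemize}
Probabilistically, $Y=k$ with probability $p^k$. Otherwise, with probability $p^{i-1}(1-p)$, we have $Y=i+Y'$, where $Y'$ has the same distribution as $Y$ and is independent of the first $i$ flips. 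Hence
\[
E(Y^n)=k^np^k+\sum_{i=1}^k p^{i-1}(1-p)\,E\bigl((i+Y)^n\bigr).
\]

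\textbf{Deriving the recursion.} First I check that $E(Y^n)<\infty$ for every $n$, so that terms may be moved across the equation. This holds because $P(Y>mk)\le(1-p^k)^m$: each disjoint block of $k$ flips is all heads with probability $p^k$, so $Y$ has a geometric tail.

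Next I expand $(i+Y)^n=\sum_{j=0}^n\binom nj i^{n-j}Y^j$ and use linearity. The $j=n$ terms contribute $E(Y^n)\sum_{i=1}^k p^{i-1}(1-p)=(1-p^k)E(Y^n)$. Moving this to the left-hand side leaves $p^kE(Y^n)$ there. Dividing by $p^k$ and writing $p^{i-1}/p^k=1/p^{k-i+1}$ gives the first formula. The base case $E(Y^0)=1$ is trivial.

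\textbf{Second formula.} Set $m=n-j\ge1$. Then
\[
\sum_{i=1}^k\frac{(1-p)i^m}{p^{k-i+1}}=\frac{1-p}{p^{k+1}}\sum_{d=0}^k d^m p^d,
\]
where the $d=0$ term is $0$ since $m\ge1$. Now apply Proposition~\ref{prop:Carlitz} with $x=p$. The factor $(1-p)$ cancels one power of $(1-p)^{m+1}$, which yields
\[
\sum_{i=0}^{n-j}\frac{e_{n-j,i}p^i-e^k_{n-j,i}p^{k+i+1}}{(1-p)^{n-j}p^{k+1}}.
\]
The upper limit can be extended from $n-j$ to $n$, because $e_{n-j,i}=e^k_{n-j,i}=0$ for $i>n-j$ by the definitions.

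\textbf{Main obstacle.} The only delicate point is justifying the decomposition rigorously, namely that the first-step split is a disjoint exhaustive partition of $Z(\HH^k)$ with the claimed probabilities. Together with it comes the finiteness of the moments needed for the subtraction step. Both follow from the tail estimate above, and everything else is routine algebra.
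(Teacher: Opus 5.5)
Your proposal is correct and follows the paper's proof: condition on the first tails, expand $E((Y+i)^n)$ binomially, move the $(1-p^k)E(Y^n)$ term to the left, divide by $p^k$, and then evaluate the inner sum with Proposition~\ref{prop:Carlitz} at $x=p$. Your geometric-tail bound showing $E(Y^n)<\infty$ is a small added justification that the paper leaves implicit.
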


\begin{proof}
It is clear that $E(Y^0)=1$. Assume $n\ge1$.
If the first $i$ flips give $\HH^{i-1}\TT$ for some $i\in\{1,2,\ldots,k\}$, then they are all wasted.
Otherwise the first $k$ flips give $\HH^k$, and the game ends right after.
Thus
\begin{align*}
E(Y^n) &= \sum_{i=1}^k p^{i-1}(1-p) E((Y+i)^n) + p^k k^n \\
&= \sum_{i=1}^k p^{i-1}(1-p) \sum_{j=0}^n \binom{n}{j} E(Y^j) i^{n-j} + p^k k^n \\
&=  \sum_{j=0}^{n-1} \sum_{i=1}^k p^{i-1}(1-p) \binom{n}{j} E(Y^j) i^{n-j} 
+ \sum_{i=1}^k p^{i-1}(1-p) E(Y^n) + p^k k^n  \\
&=  \sum_{j=0}^{n-1} \binom{n}{j} E(Y^j) \sum_{i=1}^k p^{i-1}(1-p) i^{n-j} 
+ (1-p^k) E(Y^n) + p^k k^n.
\end{align*}
This leads to the first desired recursive formula for $E(Y^n)$, and applying Proposition~\ref{prop:Carlitz} gives the second formula for $E(Y^n)$.
\end{proof}

\begin{corollary}\label{cor:Hk}
Suppose $S=\HH^k$ and $Y=Y(S)$. 
Then
\[ 
E(Y) = p^{-1} + p^{-2} + \cdots + p^{-k} = \frac{1-p^k}{(1-p)p^k},
\]
\[ 
E(Y^2) 
= \frac{2(1-p^k)}{(1-p)^2 p^{2k}} - \frac{2k+1-p^k}{(1-p)p^k},
\]
\begin{align*}
E(Y^3) 
&= \frac{6(1-p^k)}{(1-p)^3 p^{3k}} - \frac{12k+6-(6k+6)p^k}{(1-p)^2 p^{2k}} 
+ \frac{3k^2+3k+1-p^k}{(1-p)p^k},
\end{align*}
\begin{align*}
E(Y^4) =&  \frac{24(1-p^k)}{(1-p)^4 p^{4k}} - \frac{72k+36-(48k+36)p^k}{(1-p)^3 p^{3k}} \\
& + \frac{48k^2+48k+14-(12k^2+24k+14)p^k}{(1-p)^2 p^{2k}} 
- \frac{4k^3+6k^2+4k+1-p^k}{(1-p) p^k}.
\end{align*}
\end{corollary}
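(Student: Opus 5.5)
We will obtain Corollary~\ref{cor:Hk} by running the first recursive formula of Theorem~\ref{thm:Hk} for $n=1,2,3,4$ in turn. Each step needs the inner sums
\[
A_m:=\sum_{i=1}^k (1-p)\,i^m p^{i-1-k},\qquad m=0,1,2,3,4,
\]
in closed form. The recursion then reads
\[
E(Y^n)=k^n+\sum_{j=0}^{n-1}\binom{n}{j}E(Y^j)A_{n-j}.
\]
It is convenient to write $q:=p^{-k}$ and to express everything as polynomials in $q$ with coefficients rational in $p$ and polynomial in $k$. The target formulas have exactly this shape, namely terms $\frac{c(1-p^k)}{(1-p)^r p^{rk}}$ and similar.

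We first compute the sums $\sum_{i=1}^k i^m p^{i-1}$ in closed form. For this we use Proposition~\ref{prop:Carlitz} with $x=p$, shifting the index by one. The needed data $e_{m,i}$ and $e^k_{m,i}$ for $m\le 4$ are listed in Tables~\ref{tab:e} and~\ref{tab:ek}.

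\begin{itemize}
\item For $m=0$ we get $A_0=(1-p^k)/p^k=q-1$.
\item For $m=1$, $n=1$ gives $E(Y)=k+A_1$. We can check directly that the claimed $E(Y)$ satisfies the equation $E(Y)=p^kk+(1-p^k)E(Y)+\sum_i p^{i-1}(1-p)i$ from the proof of Theorem~\ref{thm:Hk}. Equivalently, $p^kE(Y)=k p^k+\sum_{i=1}^k(1-p)ip^{i-1}$, and this telescopes to $\sum_{i=1}^k p^{i-1}\cdot$ (suitable), giving $1+p+\dots+p^{k-1}$ and hence $E(Y)=\sum_{i=1}^k p^{-i}$.
\end{itemize}

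This suggests a cleaner route than raw expansion. Multiply the recursion by $p^k$ and use the identity
\[
\sum_{i=1}^k (1-p)i^m p^{i-1}=\sum_{i=1}^{k}\bigl(i^m-(i-1)^m\bigr)p^{i-1}-k^mp^k,
\]
obtained by Abel summation. Then the $k^np^k$ term cancels and
\[
p^kE(Y^n)=\sum_{j=0}^{n}\binom{n}{j}E(Y^j)\sum_{i=1}^k\bigl(i^{n-j}-(i-1)^{n-j}\bigr)p^{i-1}-k^np^k\cdot[\ldots].
\]
I would redo this bookkeeping carefully. Alternatively, the second moment can be cross-checked using the generating function: since $E(z^Y)=\frac{p^kz^k(1-pz)}{1-z+(1-p)p^kz^{k+1}}$, the moments come from derivatives at $z=1$. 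These serve as a check but are not required.

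The main plan is therefore the following. For $n=2$, substitute $E(Y^0)=1$, $E(Y)=(q-1)/(1-p)$, $A_1$, and $A_2$ into the recursion, where
\[
A_1=\frac{q-1}{1-p}-k
\]
follows from the $n=1$ case, and $A_2$ comes from Proposition~\ref{prop:Carlitz} with $n=2$. Then simplify to $\frac{2(q^2-q)}{(1-p)^2}-\frac{(2k+1)q-1}{1-p}$, which matches the stated formula after factoring $p^{-2k}(1-p^k)$. Repeat the same substitution for $n=3$ and $n=4$ using $A_3$ and $A_4$ from the tables, and collect terms by powers of $q$ and of $(1-p)^{-1}$.

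The main obstacle is the algebraic simplification for $n=3,4$. There, the expressions for $A_m$ coming from $e^k_{m,i}$ contain many terms in $p$ and must collapse to only a few terms in powers of $(1-p)^{-1}$. To control this, I would rewrite each $A_m$ in the basis $\{q(1-p)^{-r}\}$ plus a polynomial in $k$ before substituting, so that the recursion becomes a triangular linear computation. I would then verify the final coefficients, such as $72k+36$ and $48k^2+48k+14$, by checking the special case $k=1$, where $Y$ is geometric with known moments, and the case $p\to1$.
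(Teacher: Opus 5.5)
Your plan is the paper's proof: you iterate the first recursion of Theorem~\ref{thm:Hk} and evaluate the inner sums via Proposition~\ref{prop:Carlitz}. The paper likewise writes out only $n=1$ and calls $n=2,3,4$ similar but tedious. Your $n=2$ reduction to $\frac{2q(q-1)}{(1-p)^2}-\frac{(2k+1)q-1}{1-p}$ with $q=p^{-k}$ is correct.

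The Abel-summation display is wrong as written: the $j$-sum should stop at $n-1$, and the leftover term is $-p^k\sum_{j=1}^{n-1}\binom{n}{j}E(Y^j)k^{n-j}$. Your main argument does not use it, so this does not affect the proof.
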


\begin{proof}
For $n=1$ we have the following by Theorem~\ref{thm:Hk}:
\[
E(Y) 
= k + \frac{1-p}{p^{k+1}} \sum_{i=1}^k ip^i 
= k + \frac{p +k p^{k+2} - (k+1) p^{k+1}}{(1-p)p^{k+1}}.
\]
Simplifying the above gives the desired formula for $E(Y)$.
It is similar (but more and more tedious) to obtain the desired formula for the higher moments.
\end{proof}

\begin{remark}
(i) Suppose $S=\HH^k$ and $Y=Y(S)$. 
If $p=1/2$ then Corollary~\ref{cor:Hk} gives 
\begin{itemize}
\item $E(Y) = 2^{k+1}-2 = 2 + 2^2 + \cdots + 2^k$,
\item $E(Y^2) = 2^{2k+3} - (2k+5) 2^{k+1} + 2$, 
\item $E(Y^3) = 3\cdot2^{3k+4} - 3(2k+3)2^{2k+3} + (3k^2+15k+13)2^{k+1} - 2$,
\item $E(Y^4) = 3\cdot2^{4k+7} - 3(6k+7)2^{3k+5} + (24k^2+72k+43)2^{2k+3} - (4k^3+30k^2+52k+29)2^{k+1} + 2$. 
\end{itemize}
On the other hand, by our recent work~\cite[Proposition~3.2]{Huang} we have
\begin{equation}\label{eq:H1}
E(Y^n) = \sum_{i=1}^\infty \frac{i^n F_{i-1}^k}{2^i} \quad\text{for } p = \frac12.
\end{equation} 
Here we use the \emph{the Fibonacci number of order $k$} defined by $F_i^k = 0$ for $i<k-1$, $F_i^k=1$ for $i=k-1$, and $F_i^k = F_{i-1}^k + F_{i-2}^k + \cdots + F_{i-k}^k$ for $i\ge k$; taking $k=2$ recovers the well-known \emph{Fibonacci number}.
Combining Theorem~\ref{thm:die} and Eq.~\eqref{eq:H1} we obtain the following when $p=1/2$ and $k$ is small.
\begin{itemize}
\item
If $p=\frac12$ and $k=1$ then $E(Y^n) = \sum_{i\ge1} \frac{i^n}{2^i} = (1, 2, 6, 26, 150, 1082, 9366, 94586, \ldots)$~\cite[A000629]{OEIS}.
\item
If $p=\frac12$ and $k=2$ then $E(Y^n) = \sum_{i\ge1} \frac{i^nF_{i-1}}{2^i} = (1,6, 58, 822, 15514, 366006, \ldots)$~\cite[A302922]{OEIS}; see also Benjamin, Neer, Otero, and Sellers~\cite{ProbFibSum}.
\end{itemize}

(ii) Suppose $S=\HH^k$, $Y=Y(S)$, and $p=1/m$ for some positive integer $m$.
Then $E(Y^n)$ is an integer for every $n\ge1$ by Theorem~\ref{thm:Hk}.
When $k$ is small we have the following special cases.
\begin{itemize}
\item
For $p=\frac13$ and $k=1$, $E(Y^n) = (1, 3, 15, 111, 1095, 13503, 199815, 3449631, \ldots)$~\cite[A201339]{OEIS}.
\item
For $p=\frac13$ and $k=2$, $E(Y^n) = (1, 12, 258, 8274, 353742, 18904602, 1212354798, 90706565514, \ldots)$~\cite[A352971]{OEIS}.
\item
For $p=\frac14$ and $k=1$, $E(Y^n) = (1, 4, 28, 292, 4060, 70564, 1471708, 35810212, \ldots)$~\cite[A201354]{OEIS}.
\item
For $p=\frac15$ and $k=1$, $E(Y^n) = (1, 5, 45, 605, 10845, 243005, 6534045, 204972605,\ldots)$~\cite[A201365]{OEIS}.
\end{itemize}
\end{remark}

\subsection{Game ended by a run of heads followed by a tails}
Now let $Y=Y(S)$ for $S=\HH^k\TT$. 

\begin{theorem}\label{thm:HkT}
Suppose $S=\HH^k\TT$ and $Y=Y(S)$. 
Then $E(Y^0)=1$ and for $n\ge1$, 
\begin{align*}
E(Y^n) &= \sum_{j=1}^{n-1} \binom{n}{j} E(Y^j) \sum_{i=1}^k \frac{(1-p) i^{n-j}}{p^{k-i+1}}
+ \sum_{i=1}^\infty \frac{(1-p) i^n}{p^{k-i+1}} \\
&= \sum_{j=1}^{n-1} \binom{n}{j} E(Y^j) \sum_{i=0}^n \frac{e_{n-j,i} p^i - e_{n-j,i}^k p^{k+i+1}}{(1-p)^{n-j}p^{k+1}}
+ \sum_{i=0}^n \frac{e_{n,i}p^i}{(1-p)^n p^{k+1}}.
\end{align*}
\end{theorem}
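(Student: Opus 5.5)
We prove this the same way as Theorem~\ref{thm:Hk}: condition on how the game begins, get a linear equation for $E(Y^n)$ in terms of lower moments, and then convert the finite and infinite power sums into Eulerian form using Proposition~\ref{prop:Carlitz} and the Carlitz identity. Throughout we assume $0<p<1$.

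\emph{Step 1: finiteness of the moments.} Split the flips into consecutive disjoint blocks of length $k+1$. Each block equals $\HH^k\TT$ with probability $p^k(1-p)>0$, independently of the other blocks. Hence $P(Y>(k+1)t)\le (1-p^k(1-p))^t$, so $Y$ has an exponential tail and every $E(Y^j)$ is finite. This justifies the later step where we subtract $E(Y^n)$ from both sides.

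\emph{Step 2: the first-step decomposition.} Look at the first flips.
\begin{itemize}
\item If they are $\HH^{i-1}\TT$ for some $i\in\{1,\ldots,k\}$, this happens with probability $p^{i-1}(1-p)$. No suffix of $\HH^{i-1}\TT$ with $i\le k$ is a nonempty prefix of $\HH^k\TT$, so these flips are wasted and the game restarts. The total count is then $i+Y'$, where $Y'$ has the same distribution as $Y$ and is independent of the first $i$ flips.
\item Otherwise the first $k$ flips are $\HH^k$. From then on the game ends at the first tails, so the whole string is $\HH^{k+r}\TT$ for some $r\ge0$. This string has length $d=k+1+r$ and probability $p^{d-1}(1-p)$.
\end{itemize}
Therefore
\[
E(Y^n)=\sum_{i=1}^k p^{i-1}(1-p)\,E((Y+i)^n)+\sum_{d=k+1}^\infty p^{d-1}(1-p)\,d^n .
\]

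\emph{Step 3: isolating $E(Y^n)$.} Expand $E((Y+i)^n)=\sum_{j=0}^n\binom{n}{j}E(Y^j)i^{n-j}$. The $j=n$ terms contribute $(1-p^k)E(Y^n)$; move them to the left side, leaving $p^kE(Y^n)$ there. The $j=0$ terms are $\sum_{i=1}^k p^{i-1}(1-p)i^n$. Combine them with the tail sum over $d\ge k+1$ to get $\sum_{i\ge1}p^{i-1}(1-p)i^n$. Dividing by $p^k$ gives the first formula exactly, with the outer sum running over $j=1,\dots,n-1$.

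\emph{Step 4: the Eulerian form.} Write the inner finite sum as $\frac{1-p}{p^{k+1}}\sum_{i=0}^k i^{n-j}p^i$; the $i=0$ term vanishes because $n-j\ge1$. Applying Proposition~\ref{prop:Carlitz} with $x=p$ and exponent $n-j$ gives the stated expression with $e_{n-j,i}$ and $e^k_{n-j,i}$. For the infinite sum, apply the Carlitz identity $\sum_{i\ge0}i^np^i=\sum_{i=0}^n e_{n,i}p^i/(1-p)^{n+1}$, valid since $|p|<1$, and multiply by $(1-p)/p^{k+1}$.

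The only delicate point is the decomposition in Step 2. One must check that after $\HH^{i-1}\TT$ with $i\le k$ no partial progress toward $\HH^k\TT$ survives, and that once $\HH^k$ has appeared every extra heads keeps the last $k$ flips equal to $\HH^k$. After that, everything is the same bookkeeping as in the proof of Theorem~\ref{thm:Hk}.
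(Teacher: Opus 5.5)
Your proposal is correct and follows essentially the same route as the paper. Both condition on the position $i$ of the first tails, treat the first $i$ flips as wasted when $i\le k$ and the game as ended when $i>k$, merge the $j=0$ terms with the tail sum, isolate $(1-p^k)E(Y^n)$, and then convert the power sums using Proposition~\ref{prop:Carlitz} and the Carlitz identity. Your Step~1, which shows all moments are finite, is a small gain in rigor: the paper uses this implicitly when it solves for $E(Y^n)$.
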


\begin{proof}
It is clear that $E(Y^0)=1$. Assume $n\ge1$ below.
There exists an integer $i\ge1$ such that the $i$th flip gives the first tails, or in other words, we have $\HH^{i-1}\TT$ from the first $i$ flips.
If $i\le k$ then the first $i$ flips are all wasted.
If $i>k$ then the game ends right after the first $i$ flips.
Thus 
\begin{align*}
E(Y^n) &= \sum_{i=1}^k p^{i-1}(1-p) E((Y+i)^n) + \sum_{i=k+1}^\infty p^{i-1}(1-p) i^n \\
& = \sum_{i=1}^k p^{i-1}(1-p) \sum_{j=0}^n \binom{n}{j} E(Y^j)i^{n-j} + \sum_{i=k+1}^\infty p^{i-1}(1-p) i^n \\
& = \sum_{i=1}^k p^{i-1}(1-p) \sum_{j=1}^n \binom{n}{j} E(Y^j)i^{n-j} + \sum_{i=1}^\infty p^{i-1}(1-p) i^n  \\
&= \sum_{j=1}^{n-1} \binom{n}{j} E(Y^j) \sum_{i=1}^k p^{i-1}(1-p) i^{n-j}
+ (1-p^k) E(Y^n) + \sum_{i=1}^\infty p^{i-1}(1-p) i^n.
\end{align*}
Solving for $E(Y^n)$ from the above gives the first desired formula for $E(Y^n)$, and applying Corollary~\ref{cor:sum} gives the second one.
\end{proof}

It is routine to obtain the following corollary from Theorem~\ref{thm:HkT}.

\begin{corollary}\label{cor:HkT}
Suppose $S=\HH^k\TT$ and $Y=Y(S)$. 
Then
\[ E(Y) = \frac{1}{(1-p)p^k}, \]
\[ E(Y^2) = \frac{2}{(1-p)^2 p^{2k}} - \frac{2k+1}{(1-p) p^k}, \]
\[ E(Y^3) = \frac{6}{(1-p)^3 p^{3k}} - \frac{12k+6}{(1-p)^2 p^{2k}}
+ \frac{3k^2+3k+1}{(1-p) p^k}, \]
\[ E(Y^4) = \frac{24}{(1-p)^4 p^{4k}} - \frac{72k+36}{(1-p)^3 p^{3k}} 
+ \frac{48k^2+48k+14}{(1-p)^2 p^{2k}} - \frac{4k^3+6k^2+4k+1}{(1-p) p^k}, \]
\[ E(Y^5) = \frac{120}{(1-p)^5 p^{5k}} - \frac{240(2k+1)}{(1-p)^4 p^{4k}} 
+ \frac{30(18k^2+18k+5)}{(1-p)^3 p^{3k}} - \frac{10(16k^3+24k^2+14k+3)}{(1-p)^2 p^{2k}}
+ \frac{(k+1)^5-k^5}{(1-p) p^k}. \]
\end{corollary}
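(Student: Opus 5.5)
The plan is to compute the moments one at a time from the first recursive formula in Theorem~\ref{thm:HkT}, writing every sum in closed form. First I will simplify the recursion. Set $q:=1-p$ and $A:=\frac{1}{qp^k}$. Solving the derivation in the proof of Theorem~\ref{thm:HkT} for $E(Y^n)$ gives
\[
E(Y^n)=\frac{1}{p^k}\Big(\sum_{j=1}^{n-1}\binom nj E(Y^j)\,T_{n-j} + U_n\Big),
\]
where
\[
T_m:=\sum_{i=1}^k p^{i-1}q\, i^m, \qquad U_n:=\sum_{i\ge1}p^{i-1}q\,i^n .
\]
Here $U_n$ is the $n$th moment of a geometric variable $G$ with success probability $q$. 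By Corollary~\ref{cor:sum}, the tail $\sum_{i\ge k+1}p^{i-1}q\,i^m$ equals $p^k$ times a polynomial in $k$ and $1/q$. Therefore $T_m = U_m - p^k\,V_m(k)$, where $V_m(k)=E((G+k)^m)$ by memorylessness. This rewriting is the key simplification.

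Next I will check that $E(Y)=A$. Writing $\mu_j:=E(Y^j)$, the recursion becomes
\[
p^k\mu_n=\sum_{j=1}^{n-1}\binom nj\mu_j\big(U_{n-j}-p^kV_{n-j}(k)\big)+U_n .
\]
For $n=1$ this reads $p^k\mu_1=U_1=1/q$, so $\mu_1=A$. I will also record the low geometric moments
\[
U_1=\frac1q,\qquad U_2=\frac{2}{q^2}-\frac1q,\qquad U_3=\frac{6}{q^3}-\frac{6}{q^2}+\frac1q,
\]
together with $V_m(k)=\sum_{\ell}\binom m\ell k^{m-\ell}U_\ell$ (with $U_0=1$).

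I will then iterate for $n=2,\dots,5$. In each step I substitute the previously found $\mu_j$, which are polynomials in $A$ with coefficients polynomial in $k$, and use $1/(qp^k)=A$ to convert every $1/p^k$ into $qA$. The terms with the explicit factor $p^k$ from $V$ then cancel the prefactor. The result should be expressed as a polynomial $\sum_r c_r(k)A^r$ in which stray powers of $1/q$ combine with $p^{-k}$ into powers of $A$. As a check on the ansatz $\mu_n=\sum_{r=1}^n c_{n,r}(k)A^r$, the $n=2$ step gives
\[
\mu_2=\frac{1}{p^k}\big(2\mu_1U_1-2p^k\mu_1V_1+U_2\big).
\]
Here $V_1=k+1/q$, and the $-1/q$ part of $U_2$ combines so that the coefficient of $A$ is $-(2k+1)$, matching the statement.

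The main obstacle is the bookkeeping: after substitution, terms of the form $q^{-a}p^{-bk}$ with $a\neq b$ must cancel, so that only powers of $A$ survive. I expect this cancellation to be guaranteed structurally. Conditioning on the first block shows $Y=\sum_{t=1}^{N}W_t+(k+1)$-type decompositions whose moments depend only on $A$ and $k$. If the direct algebra becomes unwieldy at $n=5$, I will instead use the probability generating function $E(x^Y)$. From the same first-step analysis it is the rational function
\[
E(x^Y)=\frac{p^kq\,x^{k+1}}{1-x+p^kq\,x^{k+1}},
\]
and I will expand it at $x=e^t$ to read off the coefficients of $t^n/n!$ in terms of $A$. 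For this I set $x^{k+1}=e^{(k+1)t}$ and write the denominator as $(1-e^{t})A+e^{(k+1)t}$ after dividing by $p^kq$, then invert the series. This yields all five formulas uniformly; the coefficient $(k+1)^5-k^5$ of $A$ in $E(Y^5)$ is a useful consistency check on the linear term.
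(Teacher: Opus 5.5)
Your proposal is correct and follows the paper's route. The paper obtains these formulas by routinely iterating the recursion of Theorem~\ref{thm:HkT}, and your rewriting $T_m=U_m-p^kV_m(k)$ with $A=1/((1-p)p^k)$ just organizes that bookkeeping; your generating-function fallback $E(e^{tY})=e^{(k+1)t}/\bigl(e^{(k+1)t}+A(1-e^t)\bigr)$ is the $\overlap(\HH^k\TT)=\{\HH^k\TT\}$ case of Eq.~\eqref{eq:Z} behind Theorem~\ref{thm:moment}, which the paper notes also recovers this corollary.
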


Another corollary of Theorem~\ref{thm:HkT} involves the ordered Bell number~\cite[A000670]{OEIS} defined in Eq.~\eqref{eq:Fubini}.

\begin{corollary}
Suppose $S=\HH^k\TT$, $Y=Y(S)$, and $p=\frac12$.
For $n\ge1$ we have 
\[
E(Y^n) = 2^{k+1} b_n + \sum_{j=1}^{n-1} \binom{n}{j} E(Y^j) \sum_{i=1}^k i^{n-j} 2^{k-i}.
\]
\end{corollary}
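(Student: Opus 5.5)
We specialize the first formula of Theorem~\ref{thm:HkT} to $p=\frac12$ and evaluate the infinite sum using the ordered Bell number $b_n$ from Eq.~\eqref{eq:Fubini}.

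First, the finite inner sum simplifies directly. With $p=\frac12$ we have $1-p=\frac12$, so
\[
\frac{(1-p)\,i^{n-j}}{p^{k-i+1}} = \frac{i^{n-j}\,2^{k-i+1}}{2} = i^{n-j}\,2^{k-i}.
\]
Hence the double sum in Theorem~\ref{thm:HkT} becomes exactly
\[
\sum_{j=1}^{n-1}\binom{n}{j}E(Y^j)\sum_{i=1}^k i^{n-j}2^{k-i},
\]
which is the sum appearing in the claimed formula.

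Second, the remaining term in Theorem~\ref{thm:HkT} is
\[
\sum_{i=1}^\infty \frac{(1-p)\,i^n}{p^{k-i+1}} = \sum_{i=1}^\infty i^n\,2^{k-i} = 2^{k+1}\sum_{i=1}^\infty \frac{i^n}{2^{i+1}}.
\]
Since $n\ge1$, the $i=0$ term $0^n$ vanishes. The sum may therefore start at $i=0$, and Eq.~\eqref{eq:Fubini} identifies it as $b_n$. This term thus equals $2^{k+1}b_n$, which completes the identity.

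The only point requiring care is this infinite-series step: we must note that $0^n=0$ for $n\ge1$, and we must match the normalization $i^n/2^{i+1}$ in the definition of $b_n$. Convergence is automatic, since $|x|=\frac12<1$, which is exactly the range in which the Carlitz identity is valid. As a cross-check, the second formula of Theorem~\ref{thm:HkT} gives the same term: it is $\sum_i e_{n,i}2^{-i}\cdot 2^{n}\cdot 2^{k+1} = 2^{k+1}\sum_i e_{n,i}2^{n-i} = 2^{k+1}b_n$, again by Eq.~\eqref{eq:Fubini}.
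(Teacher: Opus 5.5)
Your proposal is correct and follows essentially the same route as the paper, which states this as an immediate corollary of Theorem~\ref{thm:HkT}. At $p=\frac12$ the inner weights become $i^{n-j}2^{k-i}$, and the infinite tail becomes $2^{k+1}\sum_{i\ge 0} i^n/2^{i+1}=2^{k+1}b_n$ by Eq.~\eqref{eq:Fubini}, using $0^n=0$ for $n\ge1$. Your cross-check through the second formula, via $b_n=\sum_i e_{n,i}2^{n-i}$, is the same identification read off the other expression for $b_n$.
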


\begin{remark}
For $Y=Y(\HH\TT)$ and $p = \frac12$, we have $E(Y^n) = (1, 4, 20, 124, 932, 8284, 85220, 997084, \ldots)$~\cite[A162509]{OEIS}.
For $T=T(\HH^k\TT)$ and $p=\frac12$, by our recent work~\cite[Proposition~3.2]{Huang} we have 
\[
E(Y^n) = \sum_{i=1}^\infty \frac{i^n \overline F_{i-2}^k}{2^i}.
\]
Here we use a variation of the Fibonacci number of order $k$ defined by $\overline F_i^k = 0$ for $i<k-1$, $\overline F_i^k = 1$ for $i=k-1$, and $\overline F_i^k = \overline F_{i-1}^k + \overline F_{i-2}^k + \cdots + \overline F_{i-k}^k + 1$, or equivalently, $\overline F_i^k = 2\overline F_{i-1}^k -  \overline F_{i-k-1}^k$, for $i\ge k$.
It turns out that $\overline F_i^k = F_0^k + F_1^k + \cdots + F_i^k$ is a partial sum of Fibonacci numbers of order $k$.
Some special cases include $\overline F_i^1=i+1$, $\overline F_i^2 = F_{i+2}-1$~\cite[A000071]{OEIS}, $\overline F_i^3$~\cite[A008937]{OEIS}, $\overline F_i^4$~\cite[A107066]{OEIS}, and so on.
The above expression of $E(Y^n)$ in terms of $\overline F_i^k$ holds because $\overline F_i^k$ is the number of binary strings of length $i+2$ with $1^k 0$ ($k$ consecutive ones followed by a zero) occurring precisely at the end.
By removing $1^k0$ we have a bijection from such binary strings to binary strings of length $i-k+1$ avoiding $1^k0$ as a (consecutive) substring.
This has an interesting connection with a recent result of Bates, Morrison, Rogers, Serafini, and Sood~\cite{Bates} asserting that the number of equivalence classes on binary strings of length $i-k+1$ for the equivalence relation $\sim_a$ is also $\overline F_i^k$, where $\sim_a$ is defined by swapping $0$ and $1$ in any occurrence of a chosen binary string $a$ of length $k+1$.
This coincidence has a nice explanation when $a=1^k 0$ since each equivalence class of $\sim_a$ is uniquely represented by a binary string avoiding $a$ in this special case, but the same does not hold for an arbitrary keyword $a$.
\end{remark}

\section{Rolling a die}\label{sec:die}

Let $m$ be a positive integer and write $[m]:=\{1,2,\ldots,m\}$.
Suppose $S$ is a word on the alphabet $[m]$ with a finite length $|S|$.
In this section we study the game in which a player keeps throwing a die with $m$ faces marked $1,2,\ldots,m$ until $S$ occurs;
when $m=2$ this becomes the coin flip game studied earlier.
Suppose that each possible outcome $i\in[m]$ occurs with a probability $p_i$, where $p_1, \ldots, p_m$ are nonnegative real numbers satisfying $p_1+\cdots+p_m=1$.
In particular, we have $p_1=\cdots=p_m=1/m$ in the unbiased situation.

\subsection{The expected number of turns to end the game}
We first study the expected number of turns for the game mentioned above to end, that is,
\[ 
E(S) := \sum_{w\in Z(S)} |w|\cdot P(w).
\] 
Here $Z(S)$ is the set of all finite words on the alphabet $[m]$ with $S$ occurring exactly once at the end, and for each word $w\in Z(S)$, $|w|$ is the length of $w$ and $P(w)$ is the probability of $w$ occurring when rolling the die exactly $|w|$ times.

To establish a closed formula for $E(S)$, we need some more definitions.
Let $x_1, \ldots, x_m$ be commuting variables.
For any finite word $w=w_1\cdots w_\ell$ on $[m]$, define $x_w:=x_{w_1}\cdots x_{w_m}$.
Then evaluating $x_w$ at $x_i=p_i$ for all $i=1,\ldots, m$ gives $P(w)$.
A \emph{factor} of $S$ is a (possibly empty) consecutive subword of $S$.
For example, the word $13211$ has $121$ as a subword (but not a factor) and $321$ as a factor.
A \emph{prefix} (resp., \emph{suffix}) of $S$ is a factor with which $S$ begins (resp., ends).
If $R$ is a prefix (resp, suffix) of $S$, then let $R\setminus S$ (resp., $S/R$) be the factor of $S$ obtained by deleting the prefix (resp., suffix) $R$.
For instance, $13$ is a prefix of $13211$, $11$ is a suffix of $13211$, $13\setminus 13211 = 211$, and $13211/11=132$.
A nonempty factor of $S$ that is simultaneously a prefix and a suffix of $S$ is called an \emph{overlap} of $S$.
Let $\overlap(S)$ denote the set of all overlaps of $S$, e.g., $\overlap(13211) = \{1, 13211\}$.


\begin{theorem}\label{thm:die}
Let $S$ be a word of a finite length on the alphabet $[m]$.
Then $E(S) = \sum_{R\in\overlap(S)} P(R)^{-1}$.
\end{theorem}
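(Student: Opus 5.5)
The plan is to use the Goulden--Jackson cluster method, as announced in the introduction. Let $A(t)=\sum_{w} x_w t^{|w|}$ be the generating function over all words $w$ on $[m]$ that avoid $S$ as a factor. Let $Z(t)=\sum_{w\in Z(S)} x_w t^{|w|}$. Once we specialize $x_i=p_i$, we have $E(S)=Z'(1)$.

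The first step is the standard decomposition relating these two series. A word avoiding $S$, followed by one extra letter, either still avoids $S$ or lies in $Z(S)$, and both decompositions are unique. Writing $x=x_1+\cdots+x_m$, this gives
\[ 1 + A(t)\,x t = A(t) + Z(t). \]
Next, the cluster method gives $A(t)=\bigl(1-xt-C(t)\bigr)^{-1}$. For a single word, the cluster generating function is $C(t)=-x_S t^{|S|}/\bigl(1+\sum_{R} x_{R\setminus S}\, t^{|S|-|R|}\bigr)$, where the sum runs over the overlaps $R\neq S$. Equivalently, set $Q(t)=\sum_{R\in\overlap(S)} x_{R\setminus S}t^{|S|-|R|}$. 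Here the term for $R=S$ is $1$, since $S\setminus S$ is the empty word. Then $C(t)=-x_S t^{|S|}/Q(t)$.

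Substituting, we get
\[ A(t)=\frac{Q(t)}{(1-xt)Q(t)+x_St^{|S|}}, \qquad Z(t)=1-(1-xt)A(t)=\frac{x_S t^{|S|}}{(1-xt)Q(t)+x_S t^{|S|}}. \]

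The second step is the specialization. Set $x_i=p_i$, so that $x=1$ and $x_S=P(S)$, and write $D(t)=(1-t)Q(t)+P(S)t^{|S|}$. Then $D(1)=P(S)$, so $Z(1)=1$, which is consistent with the game ending almost surely. Differentiating,
\[ Z'(1)=\frac{|S|P(S)D(1)-P(S)D'(1)}{D(1)^2}=|S|-\frac{D'(1)}{P(S)}. \]
We also have $D'(1)=-Q(1)+|S|P(S)$, which gives $E(S)=Q(1)/P(S)$. Finally, $P(R)\,P(R\setminus S)=P(S)$ for every overlap $R$, because $R$ is a prefix of $S$. So $P(R\setminus S)/P(S)=1/P(R)$, and summing over overlaps yields $E(S)=\sum_{R\in\overlap(S)}P(R)^{-1}$.

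I expect the main obstacle to be stating the cluster generating function correctly and justifying that it may be evaluated at $t=1$. For the formula, I would rederive it via the usual recursion: a cluster ends in a copy of $S$, and that copy either stands alone or overlaps the previous copy in some overlap $R\ne S$. This gives $C=-x_St^{|S|}-C\sum_{R\ne S}x_{R\setminus S}t^{|S|-|R|}$, which solves to the formula above. For the evaluation at $t=1$, I would argue analytically. The series for $Z$ has nonnegative coefficients, and $Z(t)$ is a rational function whose denominator $D$ has no zero on $[0,1]$ when all $p_i>0$. So Abel's theorem, or just convergence of the power series on a disk of radius greater than $1$, justifies computing $Z'(1)$ from the rational expression. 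Degenerate cases where some $p_i$ vanish, or where $P(S)=0$, can be handled by continuity, or else excluded since the expectation is then infinite.
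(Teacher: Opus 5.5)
Your argument is correct and follows the same overall plan as the paper: Goulden--Jackson for words avoiding $S$, a relation tying $Z(S)$ to them, then one derivative at $1$. The connecting decomposition is different, though. The paper appends a whole copy of $S$ to a word avoiding $S$ and cuts at the end of the first occurrence. This puts $X(S)$ in bijection with $Z(S)\times\overlap(S)$ and gives $\sum_{X(S)}x_w=\sum_{Z(S)}x_w\sum_{R}x_R^{-1}$. You append a single letter, giving $1+xtA=A+Z$, which is transparently bijective and needs no overlap analysis. The two routes produce the same rational function. Since $Q(t)/(P(S)t^{|S|})=\sum_{R\in\overlap(S)}1/(P(R)t^{|R|})$, dividing your numerator and denominator by $P(S)t^{|S|}$ yields the paper's $h=u^{-1}$ with $u=1+\sum_R(1-t)/(t^{|R|}P(R))$. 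The paper's normalization is what makes the later extension easy: $u(1)=1$ and every $\Omega^i u$ at $1$ is an explicit sum over overlaps, which feeds directly into the Fa\`a di Bruno computation of Theorem~\ref{thm:moment}. Your quotient-rule computation with $D'(1)=|S|P(S)-Q(1)$ is tailored to the first moment. In exchange, you supply the analytic justification the paper omits: the specialized series has nonnegative coefficients, $D>0$ on $[0,1]$, and $Z'(t)$ increases to $E(S)$ as $t\uparrow 1$ by monotone convergence. This needs only $P(S)>0$, because $Q$ has constant term $1$ and nonnegative coefficients. So no continuity argument is required when some $p_i$ vanish, and $P(S)=0$ is simply outside the statement. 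Finally, your one-letter relation together with the paper's append-$S$ relation, i.e.\ $(1-xt)A+Z=1$ and $x_St^{|S|}A=Q(t)Z$, already determines $A$ and $Z$. Combining the two arguments would therefore remove the need for the cluster method altogether, essentially as in Guibas and Odlyzko~\cite{GuibasOdlyzko}.
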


\begin{proof}
Let $X(S)$ be the set of all finite words on $[m]$ avoiding $S$ as a factor.
Applying the Goulden--Jackson Cluster Method~\cite{GJ1, GJ2} (see also Noonan and Zeilberger~\cite{GJ3}) we obtain the generating function for $X(S)$:
\[ 
\sum_{w\in X(S)} x_w
= \left( 1 - \sum_{i=1}^m x_i + \frac{x_S} {\sum_{R\in\overlap(S)} x_S/x_R } \right)^{-1}.
\]

Given a word on $[m]$ avoiding $S$ as a factor, appending $S$ gives a new word in which the first occurrence of $S$ and the appended copy of $S$ must overlap at some $R\in\overlap(S)$, and deleting $R\setminus S$ at the end of this word produces a word with $S$ occurring exactly once at the end.
This map is not one-to-one, but every preimage of a word on $[m]$ with $S$ occurring exactly once at the end can be obtained by removing some $R\in\overlap(S)$ at the end of this word. 
Thus
\[ 
\sum_{w\in Z(S)} x_w \sum_{R\in\overlap(S)} \frac{1}{x_R} = \sum_{w\in X(S)} x_w
= \left( 1 - \sum_{i=1}^m x_i + \frac{1}{\sum_{R\in\overlap(S)} \frac1{x_R}} \right)^{-1}.
\]
This implies that
\[ 
\sum_{w\in Z(S)} x_w
= \left( 1 + \left(1 - \sum_{i=1}^m x_i\right) \sum_{R\in\overlap(S)} \frac1{x_R} \right)^{-1}.
\]
Replacing each $x_i$ with $x x_i$, where $x$ is a new variable commuting with $x_1,\ldots, x_m$, we obtain
\begin{equation}\label{eq:Z} 
\sum_{w\in Z(S)} x^{|w|} x_w
= \left( 1 + \left(1 - x\sum_{i=1}^m x_i\right) \sum_{R\in\overlap(S)} \frac{1}{x^{|R|} x_R} \right)^{-1}.
\end{equation}
We can then obtain $E(S)$ by differentiating the above generating function with respect to $x$ and substituting in $x=1$ and $x_i=p_i$ for $i=1,\ldots,m$.
Since $x$ is independent of $x_1, \ldots, x_m$, we may first do the substitution $x_i=p_i$ and simplify Eq.~\eqref{eq:Z} to the following function of a single variable $x$ before differentiation:
\begin{equation}\label{eq:h}
h(x) := u^{-1} = \left( 1 + \sum_{R\in\overlap(S)} \frac{1-x}{x^{|R|} P(R)} \right)^{-1}.
\end{equation}
To find $h'(1)$, note that
\[ u = 1 + \sum_{R\in\overlap(S)} \frac{1-x}{x^{|R|} P(R)} \ \Longrightarrow \ 
u' = \sum_{R\in\overlap(S)} \left( \frac{-|R|}{x^{|R|+1} P(R)} - \frac{1-|R|}{x^{|R|} P(R)} \right),
\]
Setting $x=1$ gives $u(1) = 1$ and $u'(1) = \sum_{R\in\overlap(S)}\frac{-1}{P(R)}$.
Thus evaluating $h'(x) = -u^{-2} u'$ at $x=1$ gives
\[ 
E(S) = \sum_{w\in Z(S)} |w| P(w) = \sum_{R\in\overlap(S)} \frac1{P(R)}. \qedhere
\]
\end{proof}

Theorem~\ref{thm:die} recovers the results on $E(S)$ in our earlier work~\cite{Huang} when $S$ consists of at most four runs of heads and tails or alternates between heads and tails.
It also immediately implies the following corollary, which confirms affirmatively a previous conjecture~\cite{Huang} (in the unbiased coin flip case): $E(S)$ is asymptotically $m^s$ (plus some lower powers of $m$) for any binary string $S$ of length $s$ if an unbiased die with $m$ faces is used.

\begin{corollary}
For the game of rolling a die with $m$ faces unbiasedly, we have $E(S) = \sum_{R\in\overlap(S)} m^{|R|}$.
\end{corollary}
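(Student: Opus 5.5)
This is a direct specialization of Theorem~\ref{thm:die}, so the plan is to substitute the unbiased probabilities into that formula. For the unbiased die we have $p_1=\cdots=p_m=1/m$. For any finite word $R$ on $[m]$, the probability $P(R)$ is obtained by evaluating the monomial $x_R=x_{R_1}\cdots x_{R_{|R|}}$ at $x_i=p_i$. That evaluation gives a product of $|R|$ factors each equal to $1/m$, hence $P(R)=m^{-|R|}$.

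Substituting this into $E(S)=\sum_{R\in\overlap(S)}P(R)^{-1}$ from Theorem~\ref{thm:die} immediately yields $E(S)=\sum_{R\in\overlap(S)}m^{|R|}$, which is the claim.

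To justify the asymptotic statement preceding the corollary, I would then add a short observation. Since $S$ itself always belongs to $\overlap(S)$, and every other overlap is a proper prefix of $S$ of length strictly less than $s=|S|$, the sum has leading term $m^s$. All other terms are lower powers of $m$, each appearing with coefficient at most one.

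There is no real obstacle. The only point needing care is that $P(R)$ depends only on the length of $R$ in the unbiased case, and this is immediate from the definition of $x_R$. The intended monomial is the product over all $|R|$ letters of $R$; the definition in the paper writes $x_{w_m}$ for the last factor, but $x_{w_\ell}$ is clearly what is meant.
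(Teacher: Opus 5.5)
Your proposal is correct and matches the paper, which presents this corollary as an immediate consequence of Theorem~\ref{thm:die} via the substitution $P(R)=m^{-|R|}$ for the unbiased die. Your side remarks are also accurate: the leading term is $m^{|S|}$ and every other length contributes at most once, and the definition of $x_w$ should end in $x_{w_\ell}$ rather than $x_{w_m}$.
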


\begin{remark}
By Theorem~\ref{thm:die}, if $\overlap(S) = \{S\}$ then $E(S) = 1/ P(S)$.
This idea was implicit in the 2025 MATHCounts State Competition Target Round Problem \#8: throwing an unfair coin three times with a $\frac23$ chance of getting a heads each time gives $\HH\TT\TT$ with a probability of $\frac23\cdot\frac13\cdot\frac13=\frac2{27}$, so the expected number of coin flips to obtain $\HH\TT\TT$ is $\frac{27}{2}$.
However, this solution (temporarily available on the MATHCounts official website) 
has no mentioning of the crucial condition $\overlap(S)=\{S\}$, without which the reciprocal relation would not hold by Theorem~\ref{thm:die}.
\end{remark}

Another consequence of Theorem~\ref{thm:die} is given below, which was conjectured and naively explained for the unbiased coin case in our earlier work~\cite{Huang}.

\begin{corollary}
If $S'$ is the reversal of a finite word on the alphabet $[m]$ then $E(S')=E(S)$.
\end{corollary}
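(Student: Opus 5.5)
The plan is to read the result straight off Theorem~\ref{thm:die}. That theorem writes $E(S)$ as a sum of $P(R)^{-1}$ over $R\in\overlap(S)$, so it is enough to match the overlaps of $S$ with those of $S'$ in a way that keeps the probabilities equal.

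First, reversal gives a bijection $\overlap(S)\to\overlap(S')$, namely $R\mapsto R'$. Indeed, if $R$ is a nonempty prefix of $S$, then $R'$ is a suffix of $S'$. If $R$ is also a suffix of $S$, then $R'$ is a prefix of $S'$. So $R'\in\overlap(S')$. Reversal is an involution, so applying the same argument to $S'$ gives the inverse map.

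Second, $P(R')=P(R)$. This holds because $P(R)=\prod_{j} p_{r_j}$ is a product of commuting factors, and reversing $R$ only permutes those factors; equivalently, $x_{R'}=x_R$.

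Finally, reindexing the sum in Theorem~\ref{thm:die} by this bijection gives
\[
E(S')=\sum_{R'\in\overlap(S')}P(R')^{-1}=\sum_{R\in\overlap(S)}P(R)^{-1}=E(S).
\]
The only step that needs care is checking that prefix and suffix swap correctly under reversal; everything else is immediate.
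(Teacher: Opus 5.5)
Your proposal is correct and matches the paper's approach: the paper presents this corollary as an immediate consequence of Theorem~\ref{thm:die}. Your argument supplies the implicit details, namely that reversal gives a bijection $\overlap(S)\to\overlap(S')$ and that $P(R')=P(R)$.
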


Moreover, the extrema of $E(S)$ when $S$ has a fixed length also follow from Theorem~\ref{thm:die}.

\begin{corollary}\label{cor:extrema}
(i) Let $p:= \min\{p_1,\ldots,p_m\} = p_i$. 
Then the following is achieved when $S$ is a run of $i$'s:
\[
\max \left\{E(S): S\in[m]^k \right\} = 
p^{-1} + p^{-2} + \cdots + p^{-k} = \frac{1-p^k}{(1-p)p^k}.
\]
(ii) If $m>1$ and $p_1=\cdots=p_m$ then $\min \left\{E(S): S\in[m]^k \right\}= m^k$ is attained when $\overlap(S)=\{S\}$, e.g., $S$ consists of two maximal runs.
\end{corollary}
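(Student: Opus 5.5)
Both parts rest on Theorem~\ref{thm:die}, which writes $E(S)=\sum_{R\in\overlap(S)}P(R)^{-1}$. For part (i) I will bound each term of this sum from above and then show that a run of $i$'s attains the bound. For part (ii) I will bound the sum from below by its single unavoidable term.

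\emph{Part (i).} Fix $S\in[m]^k$. Every overlap $R$ of $S$ is a nonempty prefix of $S$, so its length $|R|$ lies in $\{1,\ldots,k\}$. Distinct overlaps are distinct prefixes, so they have distinct lengths. Moreover $P(R)$ is a product of $|R|$ probabilities, each at least $p$, so $P(R)^{-1}\le p^{-|R|}$. Together these give
\[
E(S)\le \sum_{\ell=1}^k p^{-\ell} = \frac{1-p^k}{(1-p)p^k}.
\]
For equality, take $S=i^k$ with $p_i=p$. Every prefix $i^\ell$ is also a suffix, so $\overlap(S)=\{i^\ell:1\le\ell\le k\}$ and $P(i^\ell)=p^\ell$. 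Then Theorem~\ref{thm:die} gives exactly this sum, which also matches Corollary~\ref{cor:Hk}. This needs $p>0$ so that the expressions are finite; if $p=0$, both sides are infinite and the statement is read in that sense.

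\emph{Part (ii).} In the unbiased case the corollary following Theorem~\ref{thm:die} gives $E(S)=\sum_{R\in\overlap(S)}m^{|R|}$. Since $S$ itself always belongs to $\overlap(S)$, we get $E(S)\ge m^k$, with equality exactly when $\overlap(S)=\{S\}$. It remains to show that such an $S$ exists, which is where $m>1$ is needed. The word $S=1^{k-1}2$ works for $k\ge2$. A proper overlap would be a nonempty prefix, beginning with $1$, that is also a suffix, ending with $2$. The only prefix containing a $2$ is $S$ itself, so no proper overlap exists. The same argument applies to any word made of two maximal runs, say $a^rb^s$ with $a\ne b$: a proper prefix ends in $a$ when it lies inside the first run, or else it is too long to be a suffix within the $b$-run and still begins with $a$. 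For $k=1$, every single letter $S$ satisfies $\overlap(S)=\{S\}$ trivially.

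The only step that needs care is the two-runs example, which requires a short case check. The rest follows directly from Theorem~\ref{thm:die}.
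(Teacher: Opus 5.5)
Your proof is correct and is the deduction from Theorem~\ref{thm:die} that the paper intends (the paper states the corollary without writing out details). You bound each $P(R)^{-1}$ by $p^{-|R|}$ over overlaps of distinct lengths in $\{1,\ldots,k\}$ for (i), and keep only the unavoidable term $R=S$ for (ii). The one loose spot is your sentence on general two-run words $a^rb^s$; the clean argument is that a proper overlap ends in $b$, so as a prefix it contains all of $a^r$, and begins with $a$, so as a suffix it contains all of $b^s$, which forces it to equal $S$. In any case, your explicit example $1^{k-1}2$ already suffices for attainment.
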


\begin{remark}
Corollary~\ref{cor:extrema} (ii) does not hold in the biased situation.
For example, if $m=3$, $p_1=\frac12$, $p_2=p_3=\frac14$, and $k\ge2$ then $E(1^{k-1}2) = E(1^{k-1}3) = 2^{k+1}$ whereas $E(1^k) = 2 + 2^2 +\cdots + 2^k = 2^{k+1}-2$.
\end{remark}

\subsection{Higher Moments}
Let $Y=Y(S)$ be the random variable for the number of turns to end the game described at the beginning of this section.
We establish a closed formula for the higher moment $E(Y^n)$ by extending the proof of Theorem~\ref{thm:die} and using a generalization of the chain rule for higher order derivatives known as Fa\`a di Bruno's formula:
\[
\frac{d^n}{dx^n} f(g(x)) = \sum_{\pi\in\Pi_n} f^{(|\pi|)}(g(x))
\prod_{B\in \pi} g^{(|B|)}(x).
\] 
Here $\Pi_n$ consists of all partitions of the set $[n]:=\{1,\ldots,n\}$, and each set partition $\pi\in\Pi_n$ consists of some (unordered) blocks $B_1,\ldots, B_{|\pi|}$.
We need to adapt Fa\`a di Bruno's formula to the operator $\Omega:=x\frac{d}{dx}$.

\begin{lemma}\label{lem:Omega}
Applying the operator $\Omega:=x\frac{d}{dx}$ to a composite function $f\circ g$  repeatedly gives the following for all $n\ge1$ (assuming the existence of all necessary derivatives of $f$ and $g$):
\[
\Omega^n (f\circ g) = \sum_{\pi\in\Pi_n} \left( f^{(|\pi|)}\circ g \right) \prod_{B\in \pi} \Omega^{|B|}g.
\] 
\end{lemma}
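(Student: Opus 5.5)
I will prove Lemma~\ref{lem:Omega} by induction on $n$, following the standard combinatorial proof of Fa\`a di Bruno's formula. The key observation is that $\Omega = x\frac{d}{dx}$ is a derivation, i.e.\ $\Omega(FG) = (\Omega F)G + F(\Omega G)$, and satisfies the chain rule in the form
\[
\Omega(F\circ g) = x F'(g(x)) g'(x) = (F'\circ g)\,\Omega g.
\]
These two properties are all that the usual proof of Fa\`a di Bruno's formula uses. The only change is that ``$d/dx$ applied to $g^{(j)}$'' is replaced by ``$\Omega$ applied to $\Omega^j g$.'' For $n=1$ the chain rule above is exactly the claim, since $\Pi_1$ has the single partition $\{\{1\}\}$.

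For the inductive step, assume the formula for $n$ and apply $\Omega$ to each summand $(f^{(|\pi|)}\circ g)\prod_{B\in\pi}\Omega^{|B|}g$ using the Leibniz rule. Two kinds of terms arise.

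\begin{itemize}
\item $\Omega$ may hit $f^{(|\pi|)}\circ g$. The chain rule gives $(f^{(|\pi|+1)}\circ g)\,\Omega g$. This corresponds to the partition of $[n+1]$ obtained from $\pi$ by adding the singleton block $\{n+1\}$, since $\Omega g = \Omega^{1}g$.
\item $\Omega$ may hit one factor $\Omega^{|B|}g$. This turns it into $\Omega^{|B|+1}g$ and corresponds to inserting $n+1$ into the block $B$.
\end{itemize}

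Every partition of $[n+1]$ arises exactly once in this way: remove $n+1$ to recover $\pi\in\Pi_n$, and note whether $n+1$ was a singleton or which block of $\pi$ it joined. Summing over all cases therefore gives the formula for $n+1$.

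The only point needing care, and hence the main obstacle, is this bookkeeping bijection between the terms produced by the Leibniz expansion and the set $\Pi_{n+1}$. It is the same bijection as in the classical proof, so I expect no real difficulty. The existence of the derivatives is assumed in the statement.

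An alternative I could mention is the substitution $x=e^t$: under it, $\Omega$ becomes $d/dt$, and the lemma reduces to the classical formula applied to $f\circ(g\circ\exp)$. The induction above, however, is self-contained.
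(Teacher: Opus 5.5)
Your proposal is correct and follows the paper's own proof: induction on $n$, where $\Omega$ either hits $f^{(|\pi|)}\circ g$ (creating the singleton block $\{n+1\}$) or hits one factor $\Omega^{|B|}g$ (inserting $n+1$ into $B$), together with the same bijection onto $\Pi_{n+1}$. Your explicit remark that $\Omega$ is a derivation satisfying $\Omega(F\circ g)=(F'\circ g)\,\Omega g$ simply states what the paper uses implicitly in its one-line inductive computation.
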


\begin{proof}
The desired formula is trivial when $n=1$. 
Assume it holds for $n=k\ge1$.
Then
\begin{align*}
\Omega^{k+1} (f\circ g) &= 
\sum_{\pi\in\Pi_k} \left( \left( f^{(|\pi|+1)}\circ g \right) \Omega g \prod_{B\in \pi} \Omega^{|B|}g
+ \left( f^{(|\pi|)}\circ g \right) \sum_{A \in \pi} \Omega^{|A|+1}g \prod_{B\in \pi\setminus\{A\}} \Omega^{|B|}g \right) \\
&= \sum_{\pi\in\Pi_{k+1}} \left( f^{(|\pi|)}\circ g \right) \prod_{B\in \pi} \Omega^{|B|}g.
\end{align*}
Here the second equality holds since partitions of $[k+1]$ can be obtained from partitions of $[k]$ by either creating a new block $\{k+1\}$ or inserting $k+1$ to an existing block.
\end{proof}

\begin{theorem}\label{thm:moment}
For every integer $n\ge1$ we have 
\[ 
E(Y^n) = \sum_{\pi\in\Pi_n} |\pi|! \prod_{B\in \pi} \sum_{R\in\overlap(S)} \frac{(1-|R|)^{|B|} - (-|R|)^{|B|}}{P(R)}
\]
\end{theorem}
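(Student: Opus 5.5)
We reuse the generating function $h(x)$ from Eq.~\eqref{eq:h} in the proof of Theorem~\ref{thm:die}. After the substitution $x_i=p_i$, Eq.~\eqref{eq:Z} gives
\[
h(x)=\sum_{w\in Z(S)} P(w)\,x^{|w|}=\sum_{d\ge0}\Pr(Y=d)\,x^d .
\]
Hence $\Omega^n h(x)=\sum_{d}d^n\Pr(Y=d)x^d$, and so $E(Y^n)=(\Omega^n h)(1)$, where $\Omega=x\frac{d}{dx}$. Applying $\Omega$ term by term near $x=1$ is harmless: $h$ is a rational function that is regular at $x=1$ (since $u(1)=1$), and the series converges there because $Y$ has finite moments. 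The plan is therefore to write $h=f\circ u$ with $f(t)=t^{-1}$ and
\[
u(x)=1+\sum_{R\in\overlap(S)}\frac{1-x}{x^{|R|}P(R)},
\]
and then apply Lemma~\ref{lem:Omega}.

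Step one is the outer function. We have $f^{(j)}(t)=(-1)^j j!\,t^{-j-1}$, and $u(1)=1$, so $f^{(|\pi|)}(u(1))=(-1)^{|\pi|}|\pi|!$.

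Step two is to compute $\Omega^{b}u$ at $x=1$ for $b\ge1$. Write $\frac{1-x}{x^{|R|}}=x^{-|R|}-x^{1-|R|}$. Since $\Omega x^{a}=a x^{a}$, we get
\[
\Omega^{b}u=\sum_{R\in\overlap(S)}\frac{(-|R|)^{b}x^{-|R|}-(1-|R|)^{b}x^{1-|R|}}{P(R)}.
\]
Evaluating at $x=1$ gives $-\sum_{R}\frac{(1-|R|)^b-(-|R|)^b}{P(R)}$. The constant $1$ in $u$ is killed by $\Omega$.

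Step three combines the two. By Lemma~\ref{lem:Omega}, each partition $\pi$ contributes $(-1)^{|\pi|}|\pi|!$ times a product of $|\pi|$ factors, and each factor carries its own minus sign. These signs cancel as $(-1)^{|\pi|}(-1)^{|\pi|}=1$, which yields exactly the stated formula. As a sanity check, $n=1$ recovers Theorem~\ref{thm:die}, since $(1-|R|)-(-|R|)=1$.

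The main obstacle is not the calculus, which is mechanical once Lemma~\ref{lem:Omega} is available. It is justifying the identification $E(Y^n)=(\Omega^n h)(1)$. This requires that $h$ is analytic in a neighborhood of $x=1$ and coincides there with the probability generating function of $Y$.

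The first thing to establish is that $\Pr(Y>d)$ decays geometrically whenever $P(S)>0$: split the rolls into disjoint blocks of length $|S|$, each of which equals $S$ with probability $P(S)$. This makes the power series converge on a disk of radius greater than $1$. The rational expression derived from the cluster method then agrees with the power series there as formal power series. Once both facts are in hand, differentiating term by term is legitimate.
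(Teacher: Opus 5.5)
Your proposal is correct and is essentially the paper's proof. You write $h=u^{-1}$, apply Lemma~\ref{lem:Omega} with $f(t)=t^{-1}$, use $u(1)=1$ and $(\Omega^b u)(1)=-\sum_{R\in\overlap(S)}\frac{(1-|R|)^b-(-|R|)^b}{P(R)}$, and cancel the signs $(-1)^{|\pi|}(-1)^{|\pi|}$. Your only addition is a careful justification of $E(Y^n)=(\Omega^n h)(1)$ via geometric tail decay (for $P(S)>0$) and identifying $h$ with the probability generating function near $x=1$, a point the paper simply carries over from the proof of Theorem~\ref{thm:die}.
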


\begin{proof}
By the proof of Theorem~\ref{thm:die}, applying the operator $\Omega$ to $h=u^{-1}$ defined in Eq.~\eqref{eq:h} $n$ times gives
\[ E(Y^n) = \sum_{w\in Z(S)} |w|^n P(w) = \left(\Omega^n h\right)|_{x=1}.
\]
By Lemma~\ref{lem:Omega},
\[ 
\Omega^n u^{-1} = 
\sum_{\pi\in\Pi_n} (-1)^{|\pi|} |\pi|! u^{-|\pi|-1}
\prod_{B\in \pi} \Omega^{|B|}u.
\]
Setting $x=1$ gives the desired formula for $E(Y^n)$ since
\[ u = 1 + \sum_{R\in\overlap(S)} \frac{1-x}{x^{|R|} P(R)} \ \Longrightarrow \ 
\Omega^i u = \sum_{R\in\overlap(S)} \left( \frac{(-|R|)^i}{x^{|R|} P(R)} - \frac{(1-|R|)^i}{x^{|R|-1} P(R)} \right).
\qedhere \]
\end{proof}

\begin{example}
We already have $E(Y) = E(S) = \sum_{R\in\overlap(S)} P(R)^{-1}$ by Theorem~\ref{thm:die}.
For $n=2$ we have $\Omega^2 h = 2u^{-3}(\Omega u)^2 - u^{-2}\Omega^2 u$ and
\[
E(Y^2) = 2\left(\sum_{R\in\overlap(S)} \frac{1}{P(R)} \right)^2
+ \sum_{R\in\overlap(S)} \frac{(1-|R|)^2-|R|^2}{P(R)}.
\]
For $n=3$ we have $\Omega^3 h = -6u^{-4}(\Omega u)^3 + 6u^{-3} \Omega u \Omega^2 u - u^{-2}\Omega^3 u$ and
\[
E(Y^3) = 6\left(\sum_{R\in\overlap(S)} \frac{1}{P(R)} \right)^3
+ 6 \sum_{R\in\overlap(S)} \frac{1}{P(R)} \sum_{R\in\overlap(S)} \frac{(1-|R|)^2-|R|^2}{P(R)}
+ \sum_{R\in\overlap(S)} \frac{(1-|R|)^3+|R|^3}{P(R)}.
\]
For $n=4$ we have $\Omega^4 h = 24u^{-5}(\Omega u)^4 - 36 u^{-4}(\Omega u)^2 \Omega^2 u + 6u^{-3} (\Omega^2 u)^2 + 8u^{-3} \Omega u \Omega^3 u - u^{-2}\Omega^4 u$ and
\begin{multline*}
E(Y^4) = 24\left(\sum_{R\in\overlap(S)} \frac{1}{P(R)} \right)^4
+36 \left(\sum_{R\in\overlap(S)} \frac{1}{P(R)} \right)^2 \sum_{R\in\overlap(S)} \frac{(1-|R|)^2-|R|^2}{P(R)} \\
+6 \left(\sum_{R\in\overlap(S)} \frac{(1-|R|)^2-|R|^2}{P(R)}\right)^2 
+8 \sum_{R\in\overlap(S)} \frac{1}{P(R)} \sum_{R\in\overlap(S)} \frac{(1-|R|)^3+|R|^3}{P(R)}
+ \sum_{R\in\overlap(S)} \frac{(1-|R|)^4 - |R|^4}{P(R)}.
\end{multline*}
\end{example}

The following corollaries follow immediately from Theorem~\ref{thm:moment}.
\begin{corollary}
The moment $E(Y^n)$ remains the same when $S$ is reversed.
\end{corollary}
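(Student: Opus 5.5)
The plan is to read the corollary directly off the closed formula in Theorem~\ref{thm:moment}. Let $S'$ denote the reversal of $S$. The right-hand side of that formula depends on $S$ only through the multiset of pairs $(|R|, P(R))$ with $R\in\overlap(S)$. So it suffices to build a bijection $\overlap(S)\to\overlap(S')$ that preserves both the length and the probability of each overlap. Given such a bijection, each inner sum $\sum_{R\in\overlap(S)} \frac{(1-|R|)^{|B|}-(-|R|)^{|B|}}{P(R)}$ equals the corresponding sum for $S'$. Summing over partitions $\pi\in\Pi_n$ then gives $E(Y(S)^n)=E(Y(S')^n)$ for every $n\ge1$, and the case $n=0$ is trivial.

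For the bijection, send $R$ to its reversal $R'$. If $R$ is a nonempty prefix of $S$, then $R'$ is a suffix of $S'$. If $R$ is a nonempty suffix of $S$, then $R'$ is a prefix of $S'$. Hence $R\in\overlap(S)$ implies $R'\in\overlap(S')$. Reversal is an involution, so the same argument applied to $S'$ shows the map is a bijection.

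Next, $|R'|=|R|$ is clear. For the probabilities, $P(R)=\prod_j p_{R_j}$ is a product of commuting factors, one for each letter of $R$. Reversing $R$ only permutes these factors, so $P(R')=P(R)$.

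The only point needing care is that Theorem~\ref{thm:moment} applies to an arbitrary finite word and to the same die with probabilities $p_1,\ldots,p_m$, so it can be invoked for $S'$ as well as for $S$. With that checked, the corollary follows; there is no real obstacle beyond this bookkeeping.
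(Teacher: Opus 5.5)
Your proposal is correct and follows the paper's approach: the paper simply says this corollary follows immediately from the closed formula in Theorem~\ref{thm:moment}. You make explicit the step the paper leaves implicit, namely that reversal $R\mapsto R'$ is a bijection $\overlap(S)\to\overlap(S')$ preserving $|R|$ and $P(R)$, so every inner sum in that formula, and hence $E(Y^n)$, is unchanged.
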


\begin{corollary}
The variance of the random variable $Y=Y(S)$ is 
\[ 
\Var(Y) = E(Y^2) - E(Y)^2 
= \left(\sum_{R\in\overlap(S)} \frac{1}{P(R)} \right)^2
+ \sum_{R\in\overlap(S)} \frac{(1-|R|)^2-|R|^2}{P(R)}.
\]
\end{corollary}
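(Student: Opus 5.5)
The variance formula follows by combining the $n=2$ case of Theorem~\ref{thm:moment}, written out in the preceding example, with Theorem~\ref{thm:die}. Set
\[
A := \sum_{R\in\overlap(S)} \frac{1}{P(R)}, \qquad C := \sum_{R\in\overlap(S)} \frac{(1-|R|)^2-|R|^2}{P(R)}.
\]
By Theorem~\ref{thm:die} (equivalently, the $n=1$ case of Theorem~\ref{thm:moment}, where the single partition $\{\{1\}\}$ contributes $\sum_R ((1-|R|)-(-|R|))/P(R) = A$), we have $E(Y)=A$, and therefore $E(Y)^2=A^2$.

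Next we compute $E(Y^2)$ from Theorem~\ref{thm:moment} with $n=2$. The set $\Pi_2$ contains exactly two partitions: $\{\{1\},\{2\}\}$ and $\{\{1,2\}\}$. The first has $|\pi|=2$ and two singleton blocks, each contributing the factor $A$, so it gives $2!\,A^2=2A^2$. The second has $|\pi|=1$ and one block of size $2$, contributing $\sum_R ((1-|R|)^2-(-|R|)^2)/P(R)=C$. Hence $E(Y^2)=2A^2+C$, in agreement with the displayed example. Subtracting gives
\[
\Var(Y)=E(Y^2)-E(Y)^2=2A^2+C-A^2=A^2+C,
\]
which is the claimed formula.

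There is no real obstacle here. The only point needing care is the sign in the size-$2$ block: $(-|R|)^2=|R|^2$, so the block term is $(1-|R|)^2-|R|^2$, as written. One could additionally simplify $C=\sum_R (1-2|R|)/P(R)$, but that is not required for the statement.
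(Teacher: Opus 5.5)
Your proposal is correct and follows the paper's own route: the paper says the corollary follows immediately from Theorem~\ref{thm:moment}, and the $n=2$ case of that theorem (written out in the preceding example) gives $E(Y^2)=2A^2+C$. Subtracting $E(Y)^2=A^2$ from Theorem~\ref{thm:die} then gives $\Var(Y)=A^2+C$, exactly as in your argument, and your enumeration of $\Pi_2$ and the sign check $(-|R|)^2=|R|^2$ are both right.
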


For the coin flip game studied in Section~\ref{sec:coin} we have the following corollary of 
Theorem~\ref{thm:moment}, which recovers Corollary~\ref{cor:Hk} and Corollary~\ref{cor:HkT}.

\begin{corollary}
In the coin flip game ended by $S$ with probability $p$ for heads, the random variable $Y=Y(S)$ satisfies
\[
E(Y^n) = \sum_{\pi\in\Pi_n} (-1)^{n-|\pi|} |\pi|! \prod_{B\in \pi} \sum_{i=1}^k \frac{i^{|B|} - (i-1)^{|B|}}{p^i}
\quad \text{for } S=\HH^k,
\]
\[ 
E(Y^n) = \sum_{\pi\in\Pi_n} (-1)^{n-|\pi|} |\pi|! \prod_{B\in \pi} \frac{(k+\ell)^{|B|}-(k+\ell-1)^{|B|}}{p^k(1-p)^\ell}
\quad \text{for } S=\HH^k\TT^\ell.
\]
\end{corollary}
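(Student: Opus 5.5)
The plan is to specialize Theorem~\ref{thm:moment} directly. For each $S$ this requires two things: identify $\overlap(S)$ together with the lengths and probabilities of the overlaps, and rewrite the summand $(1-|R|)^{|B|}-(-|R|)^{|B|}$ so that a sign can be pulled out of the product over blocks. The sign step is the same for both words. For an integer $r$ and $b=|B|\ge1$ we have
\[
(1-r)^b-(-r)^b=(-1)^b\big((r-1)^b-r^b\big)=(-1)^{b+1}\big(r^b-(r-1)^b\big).
\]
Taking the product over the blocks $B\in\pi$, and using $\sum_{B\in\pi}|B|=n$, gives the overall sign $(-1)^{n+|\pi|}=(-1)^{n-|\pi|}$. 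This produces exactly the prefactor $(-1)^{n-|\pi|}|\pi|!$ in both formulas.

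For $S=\HH^k$, every nonempty prefix $\HH^i$ with $1\le i\le k$ is also a suffix. Hence $\overlap(S)=\{\HH^i:1\le i\le k\}$, with $|R|=i$ and $P(R)=p^i$. Substituting these into the inner sum of Theorem~\ref{thm:moment} and applying the sign identity blockwise turns $\sum_{R}\big((1-|R|)^{|B|}-(-|R|)^{|B|}\big)/P(R)$ into $(-1)^{|B|+1}\sum_{i=1}^k\big(i^{|B|}-(i-1)^{|B|}\big)/p^i$. This gives the first formula.

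For $S=\HH^k\TT^\ell$ with $k,\ell\ge1$, the key observation is that $\overlap(S)=\{S\}$. Any overlap $R$ is a nonempty prefix, so it begins with $\HH$. It is also a nonempty suffix, so it ends with $\TT$. A suffix of $\HH^k\TT^\ell$ that contains an $\HH$ must contain the entire block $\TT^\ell$ together with a final segment of $\HH^k$. A prefix that contains a $\TT$ must contain all of $\HH^k$. Therefore $R$ contains both $\HH^k$ and $\TT^\ell$, so $R=S$. It follows that $|R|=k+\ell$ and $P(R)=p^k(1-p)^\ell$, and the sign identity with $r=k+\ell$ yields the second formula.

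There is no real obstacle here; the only care needed is the sign bookkeeping and the hypothesis $k,\ell\ge1$. If $k=0$ or $\ell=0$, the word is a single run and has several overlaps, so one should instead use the first formula, with $p$ replaced by $1-p$ when $k=0$. As a sanity check I would verify that $n=1,2$ reproduce $E(Y)$ and $E(Y^2)$ in Corollary~\ref{cor:Hk} and Corollary~\ref{cor:HkT} (taking $\ell=1$).
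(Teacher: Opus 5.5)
Your proposal is correct and is exactly the paper's route: specialize Theorem~\ref{thm:moment} using $\overlap(\HH^k)=\{\HH^i:1\le i\le k\}$ and $\overlap(\HH^k\TT^\ell)=\{\HH^k\TT^\ell\}$, with the blockwise identity $(1-r)^b-(-r)^b=(-1)^{b+1}\big(r^b-(r-1)^b\big)$ producing the sign $(-1)^{n-|\pi|}$. Your remark that the second formula requires $k,\ell\ge1$ (it fails for $\ell=0$, $k\ge2$) is a correct point that the paper leaves implicit.
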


\section{Questions for future research}\label{sec:questions}

There is a nice combinatorial interpretation for the Eulerian number $e_{n,i}$: It enumerates permutations of $n$ whose number of descents is $i-1$, where the number of descents can be replaced by any other Eulerian statistics, such as the number of excedances.
It would be nice to find a similar combinatorial interpretation for our one-parameter extension $e_{n,i}^k$.
Note that $e_{n,i} = e_{n,n-i+1}$ since reversing a permutation (in one-line notation) with $i-1$ descents gives a permutation with $n-i$ dscents, but $e_{n,i}^k$ does not carry this kind of symmetry (see Table~\ref{tab:ek}).


One can generalize the die rolling game studied in this paper by allowing it to terminate upon the first appearance of any element of a finite set $\mathcal{S}$ of prescribed words.
Let $Y=Y(\mathcal{S})$ be the random variable for the number of turns to end this more general game.
It might be possible to determine $E(Y^n)$ by extending our proof of Theorem~\ref{thm:die} since the generating function for finite words in $[m]$ avoiding every element of $S$ as a factor can be derived from the Goulden--Jackson Cluster Method~\cite{GJ1, GJ2}, although the result may become complicated when different words in $\mathcal{S}$ overlap with each other~\cite[examples on p.~9]{GJ3}.


The game studied in this paper is similar to another game played by two people as mentioned in Section~\ref{sec:intro}: 
Two players respectively pick two words of the same lengths on the alphabet $[m]$ and take turns to roll a die with $m$ faces marked $1,2,\ldots,m$ for $n$ times, the player seeing their word appearing more frequently being the winner.
Ekhad--Zeilberger~\cite{EkhadZeilberger}, Grimmett~\cite{Grimmett}, and Segert~\cite{Segert} provided asymptotic formulae (as $n\to\infty$) for the probabilities of $\HH\HH$ outnumbering $\HH\TT$ or the other way around in the unbiased coin flip case.
Janson, Nica, and Segert~\cite{JNS} generalized the formulae to the unbiased die rolling situation with two arbitrary words of the same length picked by the two players, and the result depends only on the number of overlaps of each of the two words.
In particular, the winner's word must have a smaller number of overlaps.
This can be naturally explained by our work in this paper since Theorem~\ref{thm:die} implies that the average number of occurrences of a word $S$ after rolling the die $n$ times is $n/E(S) = n \big/ \sum_{R\in\overlap(S)} P(R)^{-1}$, which becomes larger when $\overlap(S)$ gets smaller.
It would be nice to use the results in this paper to recover the asymptotic formulae mentioned above and derive more consequences in that direction.

\section*{Acknowledgments}

The author is grateful for email exchanges with James Sellers and with Erik Bates.


\begin{thebibliography}{99}


\bibitem{BHMS}
A-L. Basdevant, O. H\'enard, \'E. Maurel-S\'egala, and A. Singh, On cases where Litt’s game is fair, C. R. Math. Acad. Sci. Paris {\bf 363} (2025), 977--984.

\bibitem{Bates}
E. Bates, B. Morrison, M. Rogers, A. Serafini, and A. Sood, A new combinatorial interpretation of partial sums of $m$-step Fibonacci numbers, J. Integer Seq. {\bf 28} (2025), no.~6, Art. 25.6.6, 22 pp.

\bibitem{ProbFibSum}
A. T. Benjamin, J. D. Neer, D. T. Otero, and J. A. Sellers, A probabilistic view of certain weighted Fibonacci sums, Fibonacci Quart. {\bf 41} (2003), no.~4, 360--364.

\bibitem{Cheplyaka}
R. Cheplyaka, Alice and Bob flipping coins puzzle, March 18, 2024,
\url{https://ro-che.info/articles/2024-03-18-alice-bob-coin-flipping}.

\bibitem{bias}
P.~W. Diaconis, S.~P. Holmes and R.~W. Montgomery, Dynamical bias in the coin toss, SIAM Rev. {\bf 49} (2007), no.~2, 211--235.
 
\bibitem{EkhadZeilberger}
S. B. Ekhad and D. Zeilberger, How to answer questions
of the type: if you toss a coin n times, how likely is HH to show up
more than HT? 
The personal Journal of Shalosh B. Ekhad and Doron Zeilberger, May 20, 2024,
\url{https://sites.math.rutgers.edu/~zeilberg/mamarim/mamarimhtml/litt.html}

\bibitem{FulghesuSellersTaylor}
D. Fulghesu, J. A. Sellers, and C. K. Taylor, Infinite families of infinite series with integer sums, College Math. J. {\bf 54} (2023), 33-–43.


\bibitem{GJ1}
I.~P. Goulden and D.~M. Jackson, An inversion theorem for cluster decompositions of sequences with distinguished subsequences, J. London Math. Soc. (2) {\bf 20} (1979), no.~3, 567--576.

\bibitem{GJ2}
I.~P. Goulden and D.~M. Jackson, {\it Combinatorial enumeration}, Wiley-Interscience Series in Discrete Mathematics, Wiley, New York, 1983.

\bibitem{GKP}
R.~L. Graham, D.~E. Knuth and O. Patashnik, {\it Concrete mathematics}, second edition, 
Addison-Wesley, Reading, MA, 1994

\bibitem{Grimmett}
G.~R. Grimmett, Alice and Bob on $\Bbb{X}$: Reversal, Coupling, Renewal, Amer. Math. Monthly {\bf 133} (2026), no.~5, 439--451.

\bibitem{GuibasOdlyzko}
L.~J. Guibas and A.~M. Odlyzko, String overlaps, pattern matching, and nontransitive games, J. Combin. Theory Ser. A {\bf 30} (1981), no.~2, 183--208

\bibitem{Huang}
J. Huang, A coin flip game and generalizations of Fibonacci numbers, arXiv:2501.07463, to appear in Fibonacci Quart.

\bibitem{JNS}
S. Janson, M. Nica, and S. Segert, The generalized Alice HH vs Bob HT problem, J. Theoret. Probab. {\bf 38} (2025), no.~4, Paper No. 83, 52 pp.

\bibitem{Litt}
D. Litt, $\Bbb{X}$-post, March 16 2024, \url{https://x.com/littmath/status/1769044719034647001}.

\bibitem{Lutsko}
C. Lutsko, Monkeys typing and martingales, \url{https://chrislutsko.com/files/Abracadabra.pdf}.



\bibitem{MansourShattuck}
T. Mansour and M.~A. Shattuck, A combinatorial approach to a general two-term recurrence, Discrete Appl. Math. {\bf 161} (2013), no.~13-14, 2084--2094.

\bibitem{Matsusaka}
T. Matsusaka, Applications of Fa\`a{} di Bruno's formula to partition traces, Res. Number Theory {\bf 11} (2025), no.~3, Paper No. 69, 11 pp.

\bibitem{Neuwirth}
E. Neuwirth, Recursively defined combinatorial functions: extending Galton's board, Discrete Math. {\bf 239} (2001), no.~1-3, 33--51.

\bibitem{GJ3}
J. Noonan and D. Zeilberger, The Goulden-Jackson cluster method: extensions, applications and implementations, J. Differ. Equations Appl. {\bf 5} (1999), no.~4-5, 355--377.

\bibitem{Segert}
S. Segert, A proof that HT is more likely to outnumber HH than vice versa in a string of n coin flips, arXiv:2405.16660 [math.CO].

\bibitem{Sellers}
J. A. Sellers, Beyond Mere Convergence,  PRIMUS (Problems, Resources, and Issues in Mathematics Undergraduate Studies), {\bf XII} (2002), 157–-164.

\bibitem{Simonson}
S. Simonson, {\it Looking for Math in All the Wrong Places: Math in Real Life}, AMS/MAA Spectrum Vol. 104, American Mathematical Society, 2022.

\bibitem{OEIS}
N. J. A. Sloane et al., 
{\it The On-Line Encyclopedia of Integer Sequences},
OEIS Foundation Inc., \url{https://oeis.org}.

\bibitem{Spivey}
M.~Z. Spivey, On solutions to a general combinatorial recurrence, J. Integer Seq. {\bf 14} (2011), no.~9, Article 11.9.7, 19 pp.


\bibitem{Wilf}
H.~S. Wilf, The method of characteristics, and ``problem 89'' of Graham, Knuth and Patashnik, arXiv:math/0406620. 


\bibitem{Williams}
D. Williams, {\it Probability with martingales}, Cambridge Mathematical Textbooks, Cambridge Univ. Press, Cambridge, 1991.

\end{thebibliography}
\end{document}